\documentclass{amsart}
\usepackage[utf8]{inputenc}

% Packages to format Mathematics symbols
\usepackage{amsmath, amsfonts, amssymb, commath, amsthm, mathrsfs}
% Packages to have clever references
\usepackage{hyperref,cleveref}

% Environment declarations
\theoremstyle{remark}
\newtheorem{rmk}{Remark}[section]
\theoremstyle{definition}
\newtheorem{defn}[rmk]{Definition}
\theoremstyle{plain}
\newtheorem{thm}[rmk]{Theorem}
\newtheorem{prop}[rmk]{Proposition}
\newtheorem{lemma}[rmk]{Lemma}

% Personal macros
\renewcommand{\dif}{\mathop{}\!\mathrm{d}}
\newcommand{\real}{\mathbb{R}}
\newcommand{\restr}[2]{\left. #1 \right|_{#2}}
\newcommand{\parens}[1]{\left(#1\right)}
\newcommand{\brackets}[1]{\left[#1\right]}
\newcommand{\angles}[1]{\left\langle #1\right\rangle}
\newcommand{\tr}[2]{\mathrm{tr}_{#1}#2}
\newcommand{\im}[1]{\mathrm{im} \ #1}

% título
\title[Generic nondegeneracy for solutions of the Allen-Cahn equation]{Generic nondegeneracy for solutions of the Allen-Cahn equation under a volume constraint in closed manifolds}
\author{Gustavo de Paula Ramos}
\address{Instituto de Matemática e Estatística, Universidade de São Paulo, Rua do Matão, 1010, 05508-090, São Paulo, SP, Brazil.}
\email{gustavopramos@gmail.com}
\urladdr{http://www.ime.usp.br/~gpramos}

\begin{document}

\begin{abstract}
	Let $M^n$ be a connected closed smooth manifold with $n \geq 2$. We adapt the techniques in \cite{micheletti_pistoia:2009} and \cite{ghimenti_micheletti:2011} to prove the generic nondegeneracy for solutions of the Van der Waals-Allen-Cahn-Hilliard equation under a volume constraint in $M$.
	
	\smallskip
	\noindent \textbf{Keywords.} nondegenerate critical points, Allen-Cahn equation, generic result.
	
	\smallskip
	\noindent \textbf{2010 Mathematics Subject Classification.} 58E05, 35J20.
\end{abstract}

\date{\today}
\maketitle

\section{Introduction and main result}

	Let $(M^n,g)$ be a connected closed smooth Riemannian manifold, where $n \geq 2$. Let $W\colon \real \to \real$ be a function of class $C^2$. Fix $\nu,\epsilon>0$. A pair $(u,\lambda) \in H_g(M) \times \real$ is a solution for the Van der Waals-Allen-Cahn Hilliard equation under volume constraint $\nu$ when
	\begin{equation}\tag{$P_{W,\nu,\epsilon,g}$}\label{eqn:nonlinear-problem}
		\begin{cases}
			-\epsilon^2\Delta_g u + W'(u)=\lambda \\
			\int_M u \dif \mu_g = \nu
		\end{cases},
	\end{equation}
	where $\mu_g$ is a measure induced by $g$ defined on the Borel subsets of $M$ and $H_g(M)$ is a convenient Sobolev space of functions defined in section 2.

	In \cite{benci2020lusternikschnirelman}, the authors establish lower bounds on the number of solutions for $\eqref{eqn:nonlinear-problem}$ in function of  topological invariants of $M$ for sufficiently small $\nu,\epsilon>0$ and under specific hypotheses on the potential function $W$. In particular: if $\eqref{eqn:nonlinear-problem}$ only admits nondegenerate solutions, then Morse theory may be applied to prove that it admits at least $P_M(1)$ solutions, where $P_M(t)$ is the Poincaré polynomial of $M$.
	
	Our main result is that under suitable growth conditions for $W'$ and $W''$, this is indeed the case  generically with respect to $(\epsilon,g) \in ]0,\infty[\times\mathcal{M}^k$, where $1 \leq k < \infty$ and $\mathcal{M}^k$ is the space of Riemannian metrics of class $C^k$ on $M$:
	\begin{thm}\label{thm:main}
	Fix $g_0 \in \mathcal{M}^k$. Suppose that (\ref{eqn:growth-condition-1}) and (\ref{eqn:growth-condition-2}) hold. Then
	\begin{multline*}
		\mathcal{D}^*_{W,\nu}
		=
		\left\{
			(\epsilon,g) \in ]0,\infty[\times\mathcal{M}^k: \text{ any solution } (u,\lambda) \in H_{g_0}(M) \times \real
		\right.
		\\
		\left.
			\text{ for } \eqref{eqn:nonlinear-problem} \text{ is nondegenerate}
		\right\}
	\end{multline*}
	is an open dense subset of $]0,\infty[\times\mathcal{M}^k$.
	\end{thm}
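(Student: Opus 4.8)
The plan is to realize the solutions of \eqref{eqn:nonlinear-problem} as the zero set of a $C^1$ map and then run a Sard--Smale transversality argument with $(\epsilon,g)$ as parameters. We fix the Hilbert space $H:=H_{g_0}(M)$ and set
\[
	F\colon H\times\real\times\,]0,\infty[\,\times\mathcal{M}^k\longrightarrow Z,\qquad
	F(u,\lambda,\epsilon,g):=\parens{-\epsilon^2\Delta_g u+W'(u)-\lambda,\ \int_M u\dif\mu_g-\nu},
\]
with $Z$ a suitable target Banach space, so that $\eqref{eqn:nonlinear-problem}$ for the parameter $(\epsilon,g)$ is exactly the equation $F(u,\lambda,\epsilon,g)=0$; by (\ref{eqn:growth-condition-1})--(\ref{eqn:growth-condition-2}) the map $F$ is well defined and of class $C^1$, and elliptic regularity places solutions, as well as the elements of the kernels below, in $C^{1,\alpha}(M)$ (in particular they are bounded). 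At a zero of $F$ the partial differential $L:=D_{(u,\lambda)}F$, $L(v,\mu)=\parens{-\epsilon^2\Delta_g v+W''(u)v-\mu,\ \int_M v\dif\mu_g}$, is a compact perturbation of an isomorphism, hence Fredholm of index $0$, and the solution is nondegenerate precisely when $\ker L=0$, i.e.\ when $L$ is an isomorphism. Duality identifies $\operatorname{coker}L$ with the space $\ker L^\ast$ of pairs $(w,s)$ with $-\epsilon^2\Delta_g w+W''(u)w=-s$ and $\int_M w\dif\mu_g=0$.

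To see that $\mathcal{D}^*_{W,\nu}$ is open, suppose $(\epsilon_j,g_j)\to(\epsilon_0,g_0)\in\mathcal{D}^*_{W,\nu}$ while each $(\epsilon_j,g_j)$ carries a degenerate solution $(u_j,\lambda_j)$. Integrating the first equation gives $\lambda_j=\mathrm{vol}_{g_j}(M)^{-1}\int_M W'(u_j)\dif\mu_{g_j}$; then (\ref{eqn:growth-condition-1})--(\ref{eqn:growth-condition-2}) and standard a priori estimates (energy bounds, De Giorgi--Nash--Moser, elliptic $L^p$ and Schauder theory — uniform since $g_j\to g_0$ in $C^k$) bound $u_j$ in $H$, $\lambda_j$ in $\real$, and a normalized element $(v_j,\mu_j)\in\ker L_j$. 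Along a subsequence one obtains a solution $(u_0,\lambda_0)$ of \eqref{eqn:nonlinear-problem} at $(\epsilon_0,g_0)$ together with a nonzero $(v_0,\mu_0)\in\ker L_0$, i.e.\ a degenerate solution, contradicting $(\epsilon_0,g_0)\in\mathcal{D}^*_{W,\nu}$.

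The constant functions need separate care: for every $(\epsilon,g)$ the constant $c_g:=\nu/\mathrm{vol}_g(M)$ with $\lambda=W'(c_g)$ solves \eqref{eqn:nonlinear-problem}, and there $D_\epsilon F$ and $D_g F$ both vanish, so transversality genuinely fails at it. But $c_g$ is degenerate iff $-W''(c_g)/\epsilon^2$ is a nonzero eigenvalue of $-\Delta_g$, so the set $\mathcal{D}^{**}$ on which $c_g$ is nondegenerate is open (the $C^k$-continuity of $g\mapsto\lambda_1(-\Delta_g)>0$ keeping the offending eigenvalue away from $0$ in any limit) and dense (for fixed $g$, only a countable set of $\epsilon$ is excluded). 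The core of the argument is to show that $0$ is a regular value of $F$ at every zero whose parameter lies in $\mathcal{D}^{**}$. At a constant solution this is clear, $L$ being an isomorphism there; so let $(u,\lambda)$ be a non-constant solution and suppose $(w,s)\in\ker L^\ast$ annihilates $\im{D_{(\epsilon,g)}F}$. Pairing with $D_\epsilon F$, whose range is $\real\,(-2\epsilon\Delta_g u,0)$, yields $\int_M(\Delta_g u)\,w\dif\mu_g=0$. Inserting the classical first-variation formulas for $\Delta_g u$ and $\mu_g$ under $g\mapsto g+th$ (linear in $h$ and its first derivatives, with coefficients built from $g$ and the first two covariant derivatives of $u$), pairing with $D_g F$, and integrating by parts so that no derivative falls on $h$, the annihilation condition becomes $\int_M\angles{h,T}_g\dif\mu_g=0$ for all symmetric $2$-tensors $h$, where
\[
	T=-\epsilon^2\,(\nabla w\odot\nabla u)+\parens{\tfrac{\epsilon^2}{2}\parens{\angles{\nabla w,\nabla u}_g+w\,\Delta_g u}+\tfrac{s}{2}\,u}g,
\]
$\odot$ being the symmetric product normalized by $\tr{g}{(X\odot Y)}=\angles{X,Y}_g$; hence $T\equiv0$. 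Taking $h=g$, integrating, and using $\int_M(\Delta_g u)\,w\dif\mu_g=0$ forces $s\nu=0$, so $s=0$ (here $\nu>0$ is essential). Then $T=0$ makes $\nabla w\odot\nabla u$ a pointwise multiple of $g$; since a symmetric product of two vectors is proportional to $g$ only when one of them vanishes, $\nabla w=0$ or $\nabla u=0$ a.e., and the trace of $T=0$ gives $(n-2)\angles{\nabla w,\nabla u}_g+n\,w\,\Delta_g u=0$. Unique continuation forbids the non-constant $u$ from being locally constant, so $\{\nabla u\neq0\}$ is open and dense; there $\nabla w=0$, hence $w\,\Delta_g u=0$, while $\Delta_g u\equiv0$ on $\{\nabla u\neq0\}$ would make $W'(u)\equiv\lambda$ on all of $M$ and hence $u$ constant, which is excluded. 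So $w$ vanishes on a nonempty open set, and the strong unique continuation property for $-\epsilon^2\Delta_g w+W''(u)w=0$ (bounded coefficients, Lipschitz leading part since $k\geq1$) forces $w\equiv0$, a contradiction.

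Over the open set $\mathcal{D}^{**}$ the portion of $F^{-1}(0)$ above it is then a $C^1$ Banach manifold, and the restriction $\pi$ to $\mathcal{D}^{**}$ of the projection is a $C^1$ Fredholm map of index $0$; by the Sard--Smale theorem its regular values are residual, hence dense, in $\mathcal{D}^{**}$. A point of $\mathcal{D}^{**}$ is a regular value of $\pi$ exactly when every solution above it is nondegenerate — the constant one automatically, a non-constant one because $\operatorname{ind}L=0$ makes $D\pi$ surjective iff $L$ is injective — so the regular values of $\pi$ are exactly $\mathcal{D}^*_{W,\nu}\cap\mathcal{D}^{**}$; as $\mathcal{D}^{**}$ is dense, $\mathcal{D}^*_{W,\nu}$ is dense, and together with the openness shown above this gives the theorem. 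The main obstacle will be the transversality step: identifying the correct first-variation formulas for $\Delta_g$ and $\mu_g$, extracting the pointwise identity $T\equiv0$ from the cokernel pairing and running the ensuing unique-continuation case analysis, and — most easily overlooked — recognizing that the constant solutions escape the perturbation mechanism and must be dispatched by the separate spectral argument.
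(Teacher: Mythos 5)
Your overall architecture — realize solutions as a zero set, run a Sard–Smale transversality argument in the parameter $(\epsilon,g)$, treat the constant solution $\nu/\mathrm{vol}_g(M)$ separately by a spectral argument, and finish the transversality step with strong unique continuation using $\nu\neq0$ — does match the paper. But your openness argument has a genuine gap. You want to bound $u_j$ in $H_{g_0}(M)$ by invoking ``standard a priori estimates'' coming from (\ref{eqn:growth-condition-1})--(\ref{eqn:growth-condition-2}), but those hypotheses are only polynomial \emph{upper} bounds on $\abs{W'}$ and $\abs{W''}$: they supply no coercivity, no sign condition, and hence no reason at all for a sequence of solutions $(u_j,\lambda_j)$ at parameters $(\epsilon_j,g_j)\to(\epsilon_0,g_0)$ to stay bounded. (Already $W(t)=-t^2/2$ satisfies both growth conditions, and the equation $\epsilon^2\Delta_g u + u = \nu/\mathrm{vol}_g(M)$ degenerates at the spectrum of $-\Delta_g$; nothing in the hypotheses rules out solution families escaping to infinity.) Without such a bound your compactness contradiction never gets started, and your Sard--Smale step only yields density, not openness. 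The paper sidesteps this entirely: openness of $\mathcal{D}_{W,\nu}$ is part of the conclusion of the abstract transversality theorem \cite[Theorem~5.4]{henry:2005}, whose $\sigma$-properness hypothesis is verified not by a priori estimates but by chopping $F_W^{-1}(0,\nu)$ into pieces $C_s$ that carry explicit bounds by construction; openness of $\mathcal{D}^*_{W,\nu}$ is then obtained by intersecting the open set $\mathcal{D}_{W,\nu}$ from Lemma~\ref{lemma:main} with a neighborhood on which the constant solution stays nondegenerate, using continuity of $g\mapsto\alpha_j(g)$ and $g\mapsto W''(\nu/\mu_g(M))$.

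A secondary but real issue is that you never pin down the codomain $Z$ of $F(u,\lambda,\epsilon,g)=\parens{-\epsilon^2\Delta_g u+W'(u)-\lambda,\ \int_M u\dif\mu_g-\nu}$, and it is not automatic that this formula defines a map into a single $g$-independent Banach space that is $C^1$ \emph{jointly} in $(u,\lambda,\epsilon,g)$ — the dependence of $\Delta_g$ and $\mu_g$ on $g$ is exactly the delicate point. The paper's fixed-point formulation $F_W(\epsilon,g,u,\lambda)=(u-A_{\epsilon,g}\circ B_W(u,\lambda),\int_M u\dif\mu_g)$ with $A_{\epsilon,g}=i_{\epsilon,g}^*$ is built to make both issues transparent: $A_{\epsilon,g}$ compact yields Fredholm index $0$ immediately (Lemma~\ref{lemma:first-hypothesis}), and the $C^1$ dependence of $E_{\epsilon,g}$ and $A_{\epsilon,g}$ on $(\epsilon,g)$ is established in Lemmas~\ref{lemma:map-E}--\ref{lemma:map-F}. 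Your tensor identity $T\equiv0$ and the ensuing case analysis are a reasonable repackaging of the paper's \eqref{eqn:conta-lema-deg}, the normal-coordinate perturbation from \cite[Lemma~12]{ghimenti_micheletti:2011}, and the choice $h=\varphi g$, but these points are sketched rather than carried out, and the openness gap above is the part that actually breaks.
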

	
	This result is obtained by the application of an abstract transversality theorem through an appropriate adaptation of the techniques in \cite{micheletti_pistoia:2009} and \cite{ghimenti_micheletti:2011} to the context of this article.
	
	More precisely, we say that a solution $(u,\lambda) \in H_g(M) \times \real$ for $\eqref{eqn:nonlinear-problem}$ is \emph{nondegenerate} when the only pair $(v,\Lambda) \in H_g(M) \times \real$ which solves the linearized problem
	\begin{equation}\tag{$Q_{W,\epsilon,g,u}$}\label{eqn:linearized-problem}
		\begin{cases}
			-\epsilon^2\Delta_g v+W''(u)v=\Lambda \\
			\int_M v \dif\mu_g=0
		\end{cases}
	\end{equation}
	is the trivial one $(v,\Lambda)=(0,0)$.
	
	In fact, this notion coincides with the Morse theoretic notion of a nondegenerate critical point for the functional $J_{W,\epsilon,g}\colon H_g(M) \times \real \to \real$ given by
	\[
		J_{W,\epsilon,g}(u,\lambda)
		=
		\int_M \frac{\epsilon^2}{2} g(\nabla u, \nabla u) + W(u) - \lambda u \dif \mu_g - \lambda \nu.
	\]
	Indeed, $J_{W,\epsilon,g}$ is a functional of class $C^2$ for which $(v,\Lambda)$ is a solution for $\eqref{eqn:linearized-problem}$ if, and only if, $\int_M v \dif \mu_g=0$ and $(v,\Lambda) \in \ker \mathrm{Hess} (J_{W,\epsilon,g})_{(u,\lambda)}$. Therefore, $(u,\lambda)$ is a nondegenerate solution for $\eqref{eqn:nonlinear-problem}$ precisely when $(u,\lambda)$ is a nondegenerate critical point of $J_{W,\epsilon,g}$ such that $\int_M u \dif\mu_g=\nu$.
	
	For Differential Geometry, interest for the Van der Waals-Allen-Cahn-Hilliard equation under a volume constraint is justified by the results of \cite{pacard:2003}, where Pacard and Ritoré showed that one can approach constant mean curvature hypersurfaces by the nodal sets of critical points for $J_{W,\epsilon,g,\lambda}$ as $\epsilon \to 0^+$. If we consider critical points without the volume constraint, these sets approach a minimal hypersurface.
	
\subsection*{Acknowledgement}
	
	The author thanks Paolo Piccione for suggesting the topic and discussing drafts of this article.
	
\section{Preliminaries} 
\subsection*{Basic constructions}

	Fix $1 \leq k < \infty$. Denote by $\mathcal{S}^k$ the Banach space of symmetric 2-covectors on $M$ of class $C^k$. The space $\mathcal{M}^k$ of Riemannian metrics on $M$ of class $C^k$ is an open convex cone in $\mathcal{S}^k$.

	Consider any $(\epsilon,g) \in ]0,\infty[\times\mathcal{M}^k$. $(\epsilon,g)$ induces the following inner products on $C^\infty(M)$:
	\[
		\angles{u,v}_g
		=
		\int_M
			g\parens{\nabla u, \nabla v} + uv
		\dif \mu_g;
	\]
	\[
		E_{\epsilon,g}(u,v)
		:=
		\int_M
			\epsilon^2 g\parens{\nabla u, \nabla v} + uv
		\dif \mu_g.
	\]
	
	$H_g(M)$, $H_{\epsilon,g}(M)$ are, respectively, the Hilbert spaces endowed with $\angles{\cdot,\cdot}_g$, $E_{\epsilon,g}$ obtained as completions of $C^\infty(M)$. Similarly: given $1\leq q < \infty$, $L^q_g(M)$ is the Banach space obtained as completion of $C^\infty(M)$ with respect to the norm
	\[
		\norm{u}_{q,g}
		:=
		\parens{
		\int_M
			\abs{u}^q
		\dif \mu_g
		}^{1/q}.
	\]
	
	One may check that the norms induced by $\angles{\cdot,\cdot}_g$, $E_{\epsilon,g}$ on $C^\infty(M)$ are equivalent. In particular, this implies $H_g(M)=H_{\epsilon,g}(M)$ as sets and that the canonical inclusion $H_g(M) \to H_{\epsilon,g}(M)$ is an isomorphism of Banach spaces. The same holds for the canonical inclusion $H_{g'}(M) \to H_g(M)$ for any $g' \in \mathcal{M}^k$. For details, we refer the reader to \cite[Proposition 2.2]{hebey:2000}.
	
\subsection*{Considered setting}

	Suppose that
	\begin{equation}\label{eqn:growth-condition-1}
		\exists K_1 > 0 \ \forall t \in \real, \ \abs{W'(t)}\leq K_1(1+\abs{t}^{p-1});
	\end{equation}
	\begin{equation}\label{eqn:growth-condition-2}
		\exists K_2 > 0 \ \forall t \in \real, \ \abs{W''(t)}\leq K_2(1+\abs{t}^{p-2});
	\end{equation}
	for a certain $p \in \left]2,p_n\right[$, where $p_n=\infty$ for $n=2$, $p_n=(2n)/(n-2)$ for $n\geq 3$.
	
	Fix $g_0 \in \mathcal{M}^k$. Consider any $(\epsilon,g) \in ]0,\infty[\times\mathcal{M}^k$. Due to the Kondrakov theorem, the canonical inclusion $i_{\epsilon,g}\colon H_{\epsilon,g}(M) \to L_g^p(M)$ is a compact operator. Set $p':=p/(p-1)$. We define $A_{\epsilon,g}$ as the adjoint of $i_{\epsilon,g}$ while considering the canonical Banach space isomorphisms $(L^p_g(M))' \simeq L^{p'}_g(M)$ and $H_{\epsilon,g}(M) \simeq (H_{\epsilon,g}(M))'$:
\begin{defn}
	$
		A_{\epsilon,g}=i_{\epsilon,g}^*\colon L_g^{p'}(M) \to H_{\epsilon,g}(M).
	$
\end{defn}
\begin{rmk}\label{rmk:property-of-map-A}
	$A_{\epsilon,g}$ is a compact self-adjoint operator and $E_{\epsilon,g}\parens{A_{\epsilon,g}u,v}=\int_M uv \dif \mu_g$ for any $u,v \in H_g(M)$.
\end{rmk}
	
	For details on lemmas \ref{lemma:map-E} and \ref{lemma:map-A}, we refer the reader to\cite[Lemmas~2.1,~2.3]{micheletti_pistoia:2009}.

\begin{lemma}\label{lemma:map-E}
	\sloppy
	$E\colon ]0,\infty[\times\mathcal{M}^k \to \mathrm{Bil}\parens{H_{g_0}(M)}$ is a map of class $C^1$, where $E(\epsilon,g):=E_{\epsilon,g}$. In particular,
	\begin{multline*}
	\dif E_{(\epsilon,g)}[\eta,h](u,v)
	=
	2\epsilon\eta \int_M g\parens{\nabla u, \nabla v} \dif \mu_g
	+
	\epsilon^2 \int_M b_{g,h}\parens{\nabla u, \nabla v} \dif \mu_g
	+
	\\
	+
	\frac{1}{2}
	\int_M \parens{\tr{g}{h}} uv \dif \mu_g,
	\end{multline*}
	where $b_{g,h}$ is a symmetric 2-covector on $M$ of class $C^k$ given locally by
	\[
		\parens{b_{g,h}}_{ij}=\parens{\tr{g}{h}} g^{ij}/2 - g^{iq}h_{ql}g^{lj}.
	\]
\end{lemma}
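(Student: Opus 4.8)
The plan is to reduce the claim to the smoothness of the three ``building block'' maps whose sum, up to the fixed factor $\epsilon^2$ and the linear factor $2\epsilon\eta$, gives $E_{\epsilon,g}(u,v)$: namely $g \mapsto \bigl((u,v) \mapsto \int_M g(\nabla u,\nabla v)\dif\mu_g\bigr)$, the constant-in-$g$ bilinear form $(u,v)\mapsto \int_M uv \dif\mu_{g}$ viewed through the $g$-dependence of $\mu_g$, and the scalar weights $\epsilon,\epsilon^2$. Since multiplication $]0,\infty[ \to \real$, $\epsilon \mapsto \epsilon^2$ is smooth and $\mathrm{Bil}(H_{g_0}(M))$ is a Banach space on which scalar multiplication is bilinear hence $C^\infty$, it suffices to show that $g \mapsto B_g$ and $g \mapsto N_g$ are of class $C^1$ as maps $\mathcal{M}^k \to \mathrm{Bil}(H_{g_0}(M))$, where $B_g(u,v)=\int_M g(\nabla u,\nabla v)\dif\mu_g$ and $N_g(u,v)=\int_M uv\dif\mu_g$; here one uses crucially that all the Sobolev spaces $H_g(M)$ coincide with $H_{g_0}(M)$ as topological vector spaces (Proposition 2.2 in \cite{hebey:2000}, cited above), so that $B_g,N_g$ genuinely are bounded bilinear forms on the fixed space $H_{g_0}(M)$.

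The key computational step is to identify the candidate derivatives. In local coordinates one has $g(\nabla u,\nabla v)=g^{ij}\partial_i u\,\partial_j v$ and $\dif\mu_g=\sqrt{\det g}\,\dif x$; differentiating the matrix inverse gives $\dif(g^{ij})[h]=-g^{iq}h_{ql}g^{lj}$, and differentiating the volume density gives $\dif(\sqrt{\det g})[h]=\tfrac{1}{2}(\operatorname{tr}_g h)\sqrt{\det g}$. Applying the Leibniz rule to the integrand $g^{ij}\partial_i u\,\partial_j v\,\sqrt{\det g}$ and to $uv\sqrt{\det g}$ produces exactly the two ``$b_{g,h}$'' and ``$\operatorname{tr}_g h$'' terms in the claimed formula (with the factor $\tfrac{1}{2}$ on the $N_g$ term, and the $b_{g,h}$ term combining the $-g^{iq}h_{ql}g^{lj}$ contribution from $\dif(g^{ij})$ with half of the $\operatorname{tr}_g h$ contribution from $\dif(\sqrt{\det g})$, which is precisely the definition $(b_{g,h})_{ij}=(\operatorname{tr}_g h)g^{ij}/2 - g^{iq}h_{ql}g^{lj}$). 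The term $2\epsilon\eta\int_M g(\nabla u,\nabla v)\dif\mu_g$ is just the partial derivative in $\epsilon$ of $\epsilon^2 B_g$.

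It then remains to verify that these formal derivatives are the genuine Fréchet derivatives, i.e. continuity and linearity of $[\eta,h]\mapsto \dif E_{(\epsilon,g)}[\eta,h]$ as an element of $\mathrm{Bil}(H_{g_0}(M))$, plus the little-$o$ remainder estimate. Linearity in $h$ is immediate from the local formulas; boundedness of each resulting bilinear form on $H_{g_0}(M)$ follows because $h \in \mathcal{S}^k$ and $g \in \mathcal{M}^k$ give $C^k \subset C^0$ (hence $L^\infty$) bounds on all the coefficient functions $\operatorname{tr}_g h$, $g^{iq}h_{ql}g^{lj}$, and on $\sqrt{\det g}$, so that each term is controlled by $\|h\|_{\mathcal{S}^k}\,\|u\|_{H_{g_0}}\,\|v\|_{H_{g_0}}$ uniformly on a neighborhood of $g$; the same uniform $L^\infty$ control of the second-order-in-$h$ coefficients arising from Taylor expanding $g^{ij}$ and $\sqrt{\det g}$ (these are smooth functions of the matrix entries of $g$, on the open set where $g$ is positive-definite) yields the $o(\|(\eta,h)\|)$ estimate for the remainder, and continuity of $(\epsilon,g)\mapsto \dif E_{(\epsilon,g)}$ follows from continuity of $g\mapsto g^{ij}$, $g\mapsto\sqrt{\det g}$ in $C^0$. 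The main obstacle — really the only nontrivial point — is bookkeeping: making sure the coordinate-patch computations patch together into a well-defined global statement about bilinear forms on $H_{g_0}(M)$, and keeping track of the constants so that all estimates are uniform on a neighborhood of $(\epsilon,g)$; this is exactly what is carried out in \cite[Lemmas~2.1,~2.3]{micheletti_pistoia:2009} in the unconstrained setting, and the argument transfers verbatim since the volume constraint plays no role in $E$ itself.
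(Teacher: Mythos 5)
The paper gives no proof of this lemma; it simply defers to \cite[Lemmas~2.1,~2.3]{micheletti_pistoia:2009}. Your proposal is a correct reconstruction of the standard argument that reference carries out: the local coordinate computations for $\dif(g^{ij})[h]=-g^{iq}h_{ql}g^{lj}$ and $\dif(\sqrt{\det g})[h]=\tfrac12(\operatorname{tr}_g h)\sqrt{\det g}$, the Leibniz-rule combination that produces $b_{g,h}$ and the $\tfrac12\operatorname{tr}_g h$ weight, the $\partial_\epsilon(\epsilon^2)=2\epsilon$ factor, and the $C^k\hookrightarrow C^0\hookrightarrow L^\infty$ control of the coefficients needed for boundedness, continuity of the derivative, and the $o(\|(\eta,h)\|)$ remainder estimate — all check out, and the use of the Hebey equivalence to view everything on the fixed space $H_{g_0}(M)$ is exactly the right framing. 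This is essentially the only available approach, so there is nothing genuinely different to compare.
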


\begin{lemma}\label{lemma:map-A}
	$A\colon ]0,\infty[\times\mathcal{M}^k \to B\parens{L_{g_0}^{p'}(M),H_{g_0}(M)}$ is a map of class $C^1$, where $A(\epsilon,g):=A_{\epsilon,g}$. In particular,
	\[
		\dif E_{(\epsilon,g)}[\eta,h]\parens{A_{\epsilon,g}u,v}
		+
		E_{\epsilon,g}\parens{\dif A_{(\epsilon,g)}[\eta,h]u,v}
		=
		\frac{1}{2}
		\int_M \parens{\tr{g}{h}} uv \dif \mu_g.
	\]
\end{lemma}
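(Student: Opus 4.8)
The plan is to exploit the characterization of $A_{\epsilon,g}$ furnished by Remark \ref{rmk:property-of-map-A} (equivalently, by its definition as an adjoint): $A_{\epsilon,g}\in B(L^{p'}_{g_0}(M),H_{g_0}(M))$ is the unique operator with $E_{\epsilon,g}(A_{\epsilon,g}u,v)=\int_M uv\dif\mu_g$ for all $u\in L^{p'}_{g_0}(M)$, $v\in H_{g_0}(M)$ (the pairing is finite by the Sobolev embedding $H_{g_0}(M)\hookrightarrow L^p_g(M)$ and Hölder's inequality, and the norms $\norm{\cdot}_{q,g}$, $\norm{\cdot}_{q,g_0}$ are equivalent). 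Fix once and for all the Riesz isomorphism $R_0\colon H_{g_0}(M)\to H_{g_0}(M)'$ attached to $\angles{\cdot,\cdot}_{g_0}$. I would then factor $A_{\epsilon,g}$ as the composition of a ``pairing'' operator with the inverse of the operator representing $E_{\epsilon,g}$, and show each factor depends on $(\epsilon,g)$ in a $C^1$ fashion.

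Step 1: for each $(\epsilon,g)$ let $L_{\epsilon,g}\in B(H_{g_0}(M))$ be defined by $\angles{L_{\epsilon,g}w,v}_{g_0}=E_{\epsilon,g}(w,v)$. Since $E_{\epsilon,g}$ is an inner product inducing a norm equivalent to $\angles{\cdot,\cdot}_{g_0}$ (see the Preliminaries), $L_{\epsilon,g}$ is positive, self-adjoint, and invertible, hence lies in the open set $\mathrm{GL}(H_{g_0}(M))$ of isomorphisms. By Lemma \ref{lemma:map-E} the map $(\epsilon,g)\mapsto E_{\epsilon,g}$ is $C^1$ into $\mathrm{Bil}(H_{g_0}(M))$, and composing with the isometric linear isomorphism $\mathrm{Bil}(H_{g_0}(M))\cong B(H_{g_0}(M))$ induced by $R_0$ shows $(\epsilon,g)\mapsto L_{\epsilon,g}$ is $C^1$. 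Step 2: for each $g$ let $N_g\in B(L^{p'}_{g_0}(M),H_{g_0}(M))$ be given by $\angles{N_g u,v}_{g_0}=\int_M uv\dif\mu_g$. Writing $\dif\mu_g=\rho_g\dif\mu_{g_0}$ for the positive volume density $\rho_g$ (locally $\sqrt{\det g/\det g_0}$), one has $N_g=N_{g_0}\circ M_{\rho_g}$, where $M_{\rho_g}$ is multiplication by $\rho_g$ on $L^{p'}_{g_0}(M)$; since $g\mapsto\rho_g\in L^\infty(M)$ is $C^1$ (indeed smooth, being a Nemytskii operator on $C^k(M)$ built from $\det$ and $\sqrt{\cdot}$) and $\rho\mapsto M_\rho\in B(L^{p'}_{g_0}(M))$ is bounded linear, $g\mapsto N_g$ is $C^1$. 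Step 3: the defining identity gives $A_{\epsilon,g}=L_{\epsilon,g}^{-1}\circ N_g$; as inversion $\mathrm{GL}(H_{g_0}(M))\to\mathrm{GL}(H_{g_0}(M))$ is $C^\infty$ and operator composition is bounded bilinear, $A=(\epsilon,g)\mapsto L_{\epsilon,g}^{-1}\circ N_g$ is $C^1$.

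For the differential identity I would differentiate the defining relation with $u\in H_{g_0}(M)$, $v\in H_{g_0}(M)$ fixed: the two $C^1$ scalar functions $(\epsilon,g)\mapsto E_{\epsilon,g}(A_{\epsilon,g}u,v)$ and $(\epsilon,g)\mapsto\int_M uv\dif\mu_g$ coincide. Applying the product rule on the left (legitimate since $E$ and $A$ are $C^1$) and on the right the standard first variation of the volume form, $\frac{\dif}{\dif t}\big|_{t=0}\mu_{g+th}=\tfrac12\parens{\tr{g}{h}}\mu_g$, yields
\begin{multline*}
	\dif E_{(\epsilon,g)}[\eta,h](A_{\epsilon,g}u,v)+E_{\epsilon,g}\parens{\dif A_{(\epsilon,g)}[\eta,h]u,v}\\
	=\frac12\int_M\parens{\tr{g}{h}}uv\dif\mu_g,
\end{multline*}
which is the claimed formula. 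Alternatively, the whole lemma drops out of the implicit function theorem applied to the $C^1$ map $(\epsilon,g,T)\mapsto$ the operator representing $(u,v)\mapsto E_{\epsilon,g}(Tu,v)-\int_M uv\dif\mu_g$, whose partial derivative in $T$ is $S\mapsto L_{\epsilon,g}\circ S$, an isomorphism; implicit differentiation then gives the same identity.

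I expect the only genuine subtlety to be Step 2 — verifying that the metric-dependence of the $L^{p'}$--$H^1$ pairing, which enters solely through $\mu_g$, is $C^1$ in the operator-norm topology; this is where one combines the smooth dependence of the volume density on $g$ with the Sobolev embedding and Hölder's inequality, uniformly in $v$. Everything else (Lax--Milgram for the invertibility of $L_{\epsilon,g}$, smoothness of operator inversion, boundedness of composition, Jacobi's formula for the variation of $\det g$) is routine.
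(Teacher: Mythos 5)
The paper does not prove this lemma; it simply refers the reader to Lemmas~2.1 and~2.3 of the cited Micheletti--Pistoia paper, so there is no in-text argument to compare against. That said, your proof is correct and is essentially the standard route. The factorization $A_{\epsilon,g}=L_{\epsilon,g}^{-1}\circ N_g$ — with $L_{\epsilon,g}$ representing the bilinear form $E_{\epsilon,g}$ via the fixed reference inner product $\angles{\cdot,\cdot}_{g_0}$, and $N_g=N_{g_0}\circ M_{\rho_g}$ isolating the metric dependence of the $L^{p'}$--$H^1$ pairing in the volume density — cleanly reduces the $C^1$ regularity to Lemma~\ref{lemma:map-E}, smoothness of $g\mapsto\rho_g$ and smoothness of operator inversion, and then differentiating the defining identity $E_{\epsilon,g}(A_{\epsilon,g}u,v)=\int_M uv\dif\mu_g$ using the first variation of $\mu_g$ gives the displayed formula exactly. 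One small point to tighten: you differentiate with $u\in H_{g_0}(M)$, whereas the lemma concerns $u\in L^{p'}_{g_0}(M)$; this is harmless because $C^\infty(M)$ is dense in $L^{p'}_{g_0}(M)$ and both $A_{\epsilon,g}$ and $\dif A_{(\epsilon,g)}[\eta,h]$ are bounded on $L^{p'}_{g_0}(M)$ (the latter because $A$ is $C^1$ into $B(L^{p'}_{g_0}(M),H_{g_0}(M))$), so the identity passes to the limit.
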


	\sloppy
	$W'\colon \real \to \real$ is a function of class $C^1$ with suitable growth conditions, so $H_{g_0}(M) \ni u \mapsto W'(u) \in L_{g_0}^{p'}(M)$ is a Nemytskii operator of class $C^1$. For details on this argument, we recommend the reference \cite{kavian:1993}. This implies:
\begin{lemma}
	The Nemytskii operator $B_W\colon H_{g_0}(M)\times\real \to L_{g_0}^{p'}(M)$ given by $B_W(u,\lambda)=\lambda+u-W'(u)$ is a map of class $C^1$. In particular,
	\[
		\dif \parens{B_W}_{(u,\lambda)}[v,\Lambda]=\Lambda+v-vW''(u).
	\]
\end{lemma}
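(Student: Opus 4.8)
The plan is to reduce everything to a well-known fact about superposition (Nemytskii) operators between Lebesgue spaces, isolating the nonlinearity from an affine part that is trivially smooth. First I would write $B_W(u,\lambda) = \parens{\lambda + u} - N(u)$, where $N\colon H_{g_0}(M) \to L^{p'}_{g_0}(M)$ is the superposition operator $N(u) = W'(u)$. The map $(u,\lambda) \mapsto \lambda + u$ is the restriction to $H_{g_0}(M) \times \real$ of a bounded linear map into $L^{p'}_{g_0}(M)$: constant functions belong to $L^{p'}_{g_0}(M)$ since $\mu_{g_0}(M) < \infty$, and the inclusions $H_{g_0}(M) \hookrightarrow L^p_{g_0}(M) \hookrightarrow L^{p'}_{g_0}(M)$ are continuous (the first because $p < p_n$, the second because $p \geq p'$ and $\mu_{g_0}(M) < \infty$). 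Since bounded linear maps are of class $C^1$ and coincide with their own differential, it suffices to show that $N$ is of class $C^1$ with $\dif N_u[v] = W''(u)\, v$; the stated formula for $\dif \parens{B_W}_{(u,\lambda)}$ then follows at once.

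Next I would factor $N = \widetilde N \circ i$, where $i\colon H_{g_0}(M) \to L^p_{g_0}(M)$ is the (continuous, indeed compact by the Kondrakov theorem) Sobolev inclusion and $\widetilde N\colon L^p_{g_0}(M) \to L^{p'}_{g_0}(M)$ is the superposition operator $u \mapsto W'(u)$; it then suffices to prove that $\widetilde N$ is of class $C^1$. The only arithmetic to be checked is the matching of exponents. The operator $\widetilde N$ is well defined because $(p-1)p' = p$, so $\abs{W'(u)}^{p'}$ is dominated by a multiple of $1 + \abs{u}^p \in L^1_{g_0}(M)$ thanks to \eqref{eqn:growth-condition-1}. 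The candidate derivative $v \mapsto W''(u)\, v$ is a bounded operator from $L^p_{g_0}(M)$ to $L^{p'}_{g_0}(M)$ because $(p-2)\cdot \frac{p}{p-2} = p$, so \eqref{eqn:growth-condition-2} puts $W''(u)$ in $L^{p/(p-2)}_{g_0}(M)$, and Hölder's inequality with the conjugate pair $\frac{p}{p-2}, p$ (whose reciprocals add up to $1/p'$) yields $\norm{W''(u)\, v}_{p',g_0} \leq \norm{W''(u)}_{p/(p-2),g_0}\, \norm{v}_{p,g_0}$.

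For the Fréchet differentiability I would use the pointwise identity $W'(u+v) - W'(u) - W''(u)\, v = \int_0^1 \brackets{W''(u + tv) - W''(u)}\, v \dif t$ and estimate its $L^{p'}_{g_0}(M)$-norm, via the same Hölder bound, by $\sup_{t \in [0,1]} \norm{W''(u + tv) - W''(u)}_{p/(p-2),g_0}\, \norm{v}_{p,g_0}$. This is $o\parens{\norm{v}_{p,g_0}}$ exactly because the superposition operator associated with $W''$ is continuous from $L^p_{g_0}(M)$ into $L^{p/(p-2)}_{g_0}(M)$ — again by virtue of \eqref{eqn:growth-condition-2} — and that same continuity shows that $u \mapsto \parens{v \mapsto W''(u)\, v}$ is continuous into $B\parens{L^p_{g_0}(M), L^{p'}_{g_0}(M)}$, upgrading the computation to $C^1$ regularity. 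The one genuinely nontrivial ingredient is precisely this continuity (together with the measurability and boundedness) of Nemytskii operators between Lebesgue spaces with matching polynomial growth; I would not reprove it but cite \cite{kavian:1993}. Everything else is elementary once the exponents $p' = p/(p-1)$ and $p/(p-2)$ have been identified, and all of them are finite and at least $1$ because $2 < p < p_n$.
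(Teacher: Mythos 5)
Your proof is correct and takes essentially the same route as the paper: reduce to the $C^1$ regularity of the superposition operator $u \mapsto W'(u)$ from $H_{g_0}(M)$ to $L^{p'}_{g_0}(M)$ using the growth conditions \eqref{eqn:growth-condition-1}, \eqref{eqn:growth-condition-2} and the reference \cite{kavian:1993}, with the affine part $(u,\lambda)\mapsto \lambda+u$ absorbed as a bounded linear map. The paper simply cites Kavian and asserts the conclusion; you have filled in the standard details, and your exponent bookkeeping ($p'=p/(p-1)$, $W''(u)\in L^{p/(p-2)}_{g_0}$, Hölder with conjugate exponents $p/(p-2)$ and $p$) and the integral-remainder argument for Fréchet differentiability are accurate.
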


	In the next definition, we identify the space of constant real-valued functions on $M$ with $\real$:
\begin{defn}\label{defn:map-F}
	Let $F_W\colon ]0,\infty[\times \mathcal{M}^k \times (H_{g_0}(M)\setminus \real) \times \real \to H_{g_0}(M) \times \real$ be given by
	\[
		F_W\parens{\epsilon,g,u,\lambda}=\parens{u - A_{\epsilon,g}\circ B_W(u,\lambda),\int_M u \dif \mu_g}.
	\]
\end{defn}

	Using remark \ref{rmk:property-of-map-A}, we can prove that the set of solutions $(u,\lambda) \in (H_{g_0}(M)\setminus \real)\times\real$ for $\eqref{eqn:nonlinear-problem}$ is a level-set of $F_W$:
\begin{rmk}\label{rmk:solution-is-in-F-10}
	 $(u,\lambda) \in H_{g_0}(M)\times\real$ is a solution for $\eqref{eqn:nonlinear-problem}$ if, and only if, $F_W(\epsilon,g,u,\lambda)=(0,\nu)$.
\end{rmk}

\begin{lemma}\label{lemma:map-F}
	$F_W\colon ]0,\infty[\times \mathcal{M}^k \times (H_{g_0}(M)\setminus \real) \times \real \to H_{g_0}(M) \times \real$ is a map of class $C^1$. In particular,
	\begin{multline*}
		\dif \parens{F_W}_{\parens{\epsilon,g,u,\lambda}} [\eta,h,v,\Lambda]
		=
		\\
		=
		\left(
		v
		-
		A_{\epsilon,g} \circ\dif\parens{B_W}_{(u,\lambda)}[v,\Lambda]
		-
		\dif A_{(\epsilon,g)}[\eta,h]\circ B_W(u,\lambda)
		,
		\right.
		\\
		\left.
		,
		\int_M \frac{1}{2}\parens{\tr{g}{h}}u + v \dif \mu_g
		\right)
	\end{multline*}
\end{lemma}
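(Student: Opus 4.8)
The plan is to show that $F_W$ is $C^1$ by verifying that each of its three constituent pieces is $C^1$ and then applying the chain and product rules. First I would observe that the first coordinate of $F_W$ is the sum of three maps: the (linear, hence smooth) evaluation $(\epsilon,g,u,\lambda)\mapsto u$; the composition $(\epsilon,g,u,\lambda)\mapsto A_{\epsilon,g}\circ B_W(u,\lambda)$; and the map $(\epsilon,g,u,\lambda)\mapsto \dif A_{(\epsilon,g)}[\eta,h]\circ B_W(u,\lambda)$ implicit in differentiating the previous one. For the composition $A_{\epsilon,g}\circ B_W(u,\lambda)$, I would invoke Lemma~\ref{lemma:map-A}, which gives that $A\colon ]0,\infty[\times\mathcal{M}^k \to B(L^{p'}_{g_0}(M),H_{g_0}(M))$ is $C^1$, together with the preceding lemma, which gives that $B_W\colon H_{g_0}(M)\times\real \to L^{p'}_{g_0}(M)$ is $C^1$. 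Since the bilinear evaluation map $B(X,Y)\times X \to Y$, $(T,x)\mapsto Tx$, is a bounded bilinear map and hence smooth, the composition $(\epsilon,g,u,\lambda)\mapsto A_{\epsilon,g}(B_W(u,\lambda))$ is $C^1$ by the chain rule; differentiating via the product rule on $(T,x)\mapsto Tx$ produces exactly the two terms $A_{\epsilon,g}\circ \dif(B_W)_{(u,\lambda)}[v,\Lambda]$ and $\dif A_{(\epsilon,g)}[\eta,h]\circ B_W(u,\lambda)$ in the claimed formula.

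Next I would handle the second coordinate $(\epsilon,g,u,\lambda)\mapsto \int_M u\dif\mu_g$. This is a restriction to $]0,\infty[\times\mathcal{M}^k\times(H_{g_0}(M)\setminus\real)\times\real$ of a map independent of $\epsilon$ and $\lambda$, so it suffices to see that $(g,u)\mapsto \int_M u\dif\mu_g$ is $C^1$ on $\mathcal{M}^k\times H_{g_0}(M)$. It is linear and bounded in $u$ for fixed $g$, and the dependence on $g$ enters only through the density of $\mu_g$ against a fixed background measure, which depends on $g$ in a $C^k$ (in particular $C^1$) fashion; this is the same kind of computation underlying Lemmas~\ref{lemma:map-E} and~\ref{lemma:map-A}. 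Differentiating $g\mapsto\mu_g$ in the direction $h$ produces the factor $\tfrac12\tr{g}{h}$ (the linearization of $\sqrt{\det g}$), which accounts for the term $\int_M \tfrac12(\tr{g}{h})u\dif\mu_g$, while differentiating in $u$ in the direction $v$ gives $\int_M v\dif\mu_g$; together these match the second component of the stated differential.

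Finally I would assemble the pieces: $F_W$ is $C^1$ as a map into $H_{g_0}(M)\times\real$ because each coordinate is $C^1$, and its differential is the formula obtained by combining the computations above. I do not expect any serious obstacle here; the content is genuinely bookkeeping once Lemmas~\ref{lemma:map-E}, \ref{lemma:map-A} and the $C^1$ regularity of $B_W$ are in hand. The one point that requires a little care is the domain restriction to $H_{g_0}(M)\setminus\real$: this open set is removed not because of any failure of differentiability of $F_W$ there, but so that the solutions of interest avoid the constant functions (where the volume constraint degenerates), so it plays no role in the $C^1$ claim itself. The mildly delicate technical step — and the only place growth conditions \eqref{eqn:growth-condition-1}–\eqref{eqn:growth-condition-2} are used — is already encapsulated in the cited fact that the Nemytskii operator $u\mapsto W'(u)$ is $C^1$ from $H_{g_0}(M)$ to $L^{p'}_{g_0}(M)$, so here I would simply quote it.
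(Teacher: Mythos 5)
The paper states Lemma~\ref{lemma:map-F} without giving a proof, treating it as an immediate consequence of Lemmas~\ref{lemma:map-E}, \ref{lemma:map-A}, the Nemytskii lemma and the chain/product rule for the bilinear evaluation $(T,x)\mapsto Tx$; your proposal supplies exactly that bookkeeping and is correct. One incidental aside is slightly off: constants are excluded from the domain not because the volume constraint degenerates there, but because the transversality argument in Lemma~\ref{lemma:nondeg} requires $\lambda - W'(u) \not\equiv 0$, which fails for constant solutions -- though, as you correctly observe, this has no bearing on the $C^1$ claim itself.
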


\section{Proof of main result}
	
	Consider the following abstract transversality theorem:
\begin{thm}\label{thm:transversality}\cite[Theorem 5.4]{henry:2005}
	Let $X,Y,Z$ be real Banach spaces and $U, V$ be respective open subsets of $X,Y$. Let $F\colon V \times U \to Z$ be a map of class $C^m$, where $m \geq 1$. Let $z_0 \in \im{F}$. Suppose that
\begin{enumerate}
	\item
		Given $y \in V$, $F(y,\cdot)\colon x \mapsto F(x,y)$ is a Fredholm map of index $l< m$, i.e., $\dif F(y,\cdot)_x\colon X \to Z$ is a Fredholm operator of index $l$ for any $x \in U$;
	\item
		$z_0$ is a regular value of $F$, i.e., $\dif F_{\parens{y_0,x_0}}\colon Y \times X \to Z$ is surjective for any $(y_0,x_0) \in F^{-1}(z_0)$;
	\item
		Let $\iota\colon F^{-1}(z_0) \to Y \times X$ be the canonical embedding and $\pi_Y\colon Y \times X \to Y$ be the projection of the first coordinate. Then $\pi_Y\circ \iota\colon F^{-1}(z_0) \to Y$ is $\sigma$-proper, i.e., $F^{-1}(z_0)=\bigcup_{s=1}^\infty C_s$, where given $s=1,2,...$, $C_s$ is a closed subset of $F^{-1}(z_0)$ and $\restr{\pi_Y\circ\iota}{C_s}$ is proper.
\end{enumerate}	
	Then the set $\{ y \in V: z_0 \text{ is a regular value of } F(y,\cdot) \}$ is an open dense subset of $V$.
\end{thm}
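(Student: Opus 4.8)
\emph{Proof sketch (proposal).} The plan is to reduce the statement to the Sard--Smale theorem applied to the restriction of the projection $\pi_Y$ to the zero set $\mathcal Z := F^{-1}(z_0)$. \emph{First} one checks that $\mathcal Z$ is a $C^m$ Banach submanifold of $V\times U$. Fix $(y,x)\in\mathcal Z$ and write $L:=\dif F(y,\cdot)_x$ for the partial differential in the $U$-variable and $P$ for the partial differential in the $V$-variable, so that $\dif F_{(y,x)}(h,v)=Ph+Lv$. By hypothesis (2) the operator $\dif F_{(y,x)}\colon Y\times X\to Z$ is onto, and since $L$ is Fredholm by hypothesis (1), a standard argument (using that $\ker L$ and $\operatorname{coker}L$ are finite-dimensional) produces a closed complement of $\ker\dif F_{(y,x)}$ in $Y\times X$. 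The implicit function theorem then exhibits $\mathcal Z$ as a $C^m$ submanifold with $T_{(y,x)}\mathcal Z=\ker\dif F_{(y,x)}$, so $\Pi:=\pi_Y\circ\iota\colon\mathcal Z\to Y$ is of class $C^m$.

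\emph{Next} one records the Fredholm data of $\Pi$. At $(y,x)\in\mathcal Z$ the differential $\dif\Pi_{(y,x)}$ is the restriction to $\ker\dif F_{(y,x)}$ of $(h,v)\mapsto h$, hence $\ker\dif\Pi_{(y,x)}=\{0\}\times\ker L$ and $\im{\dif\Pi_{(y,x)}}=P^{-1}(\im L)$. The linear map $Y/P^{-1}(\im L)\to Z/\im L$ induced by $P$ is injective, and it is onto because $\im P+\im L=Z$ by surjectivity of $\dif F_{(y,x)}$; therefore $\dim\operatorname{coker}\dif\Pi_{(y,x)}=\dim\operatorname{coker}L<\infty$ and $\operatorname{ind}\dif\Pi_{(y,x)}=\dim\ker L-\dim\operatorname{coker}L=l$. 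Thus $\Pi$ is a $C^m$ Fredholm map of index $l<m$. These same two identities give the crucial dictionary: for $y\in V$, $\dif\Pi_{(y,x)}$ is onto at some --- equivalently, at every --- point $x$ of the fibre $F(y,\cdot)^{-1}(z_0)$ if and only if $L=\dif F(y,\cdot)_x$ is onto there; that is, $y$ is a regular value of $\Pi$ if and only if $z_0$ is a regular value of $F(y,\cdot)$ (both holding vacuously if $y\notin\im\Pi$). Consequently the set in the statement is $V\cap\{\,y:y\text{ is a regular value of }\Pi\,\}$.

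\emph{Finally} one shows that the regular values of $\Pi$ form an open dense subset of $V$. Let $\Sigma\subseteq\mathcal Z$ be the critical set of $\Pi$; since surjectivity is open among Fredholm operators of a fixed index and $\dif\Pi$ is continuous, $\Sigma$ is closed in $\mathcal Z$. By hypothesis (3) write $\mathcal Z=\bigcup_{s\geq1}C_s$ with $C_s$ closed and $\Pi|_{C_s}$ proper; then $\Sigma\cap C_s$ is closed and $\Pi|_{\Sigma\cap C_s}$ is proper, so $\Pi(\Sigma\cap C_s)$ is closed in $Y$. For \emph{density}, one trivialises the Fredholm structure over a neighbourhood of each compact fibre $(\Pi|_{\Sigma\cap C_s})^{-1}(y)$ and, patching finitely many charts, reduces the local statement to the classical Sard theorem (this is where the inequality $l<m$ is used); it follows that the closed set $\Pi(\Sigma\cap C_s)$ has empty interior, so $\Pi(\Sigma)=\bigcup_s\Pi(\Sigma\cap C_s)$ is meagre and, $Y$ being a Baire space, its complement is dense, a fortiori dense in $V$. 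For \emph{openness}, near a regular value $y_0$ one uses the properness of the pieces $\Pi|_{C_s}$ to show that the set of critical values is closed in a neighbourhood of $y_0$, so that $y_0$ admits a neighbourhood consisting of regular values.

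I expect the last paragraph to be the main obstacle. The submanifold property, the index computation, and the dictionary between the two notions of regular value are routine Fredholm linear algebra, whereas the genericity itself requires (a) the infinite-dimensional Sard reduction, carried out fibrewise on each proper piece $\Pi|_{\Sigma\cap C_s}$ by patching finitely many local Fredholm trivialisations around the relevant compact fibre, and (b) the openness assertion, where the a priori only $F_\sigma$ set of critical values must be shown locally closed --- this is exactly the step that forces hypothesis (3) on $\sigma$-properness.
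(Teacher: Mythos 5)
The paper itself offers no proof of this statement --- it is quoted as a black box from \cite[Theorem 5.4]{henry:2005} --- so your sketch can only be judged on its own terms. Its first two thirds are the standard reductions and are fine: $F^{-1}(z_0)$ is a $C^m$ submanifold with tangent space $\ker \dif F_{(y,x)}$, the identities $\ker \dif\Pi_{(y,x)}=\{0\}\times\ker L$ and $\operatorname{coker}\dif\Pi_{(y,x)}\cong\operatorname{coker}L$ make $\Pi=\pi_Y\circ\iota$ a $C^m$ Fredholm map of index $l$, and the pointwise equivalence ``$\dif\Pi_{(y,x)}$ onto $\iff$ $L$ onto'' gives the dictionary between regular values of $\Pi$ and regular values of $F(y,\cdot)$. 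The density argument is also the standard one (Smale's local representation plus classical Sard, which is where $l<m$ enters, properness of $\Pi|_{C_s}$ to confine nearby fibres and make each $\Pi(\Sigma\cap C_s)$ closed with empty interior, then Baire), and is acceptable as a sketch.

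The genuine gap is exactly where you feared: openness. Properness of the individual pieces does not make the critical-value set $\bigcup_s\Pi(\Sigma\cap C_s)$ locally closed near a regular value $y_0$: if critical values $y_n\to y_0$ have witnesses $(y_n,x_n)\in\Sigma$, nothing prevents the witnesses from leaving every $C_s$, so no limiting critical point over $y_0$ is produced and your ``closed in a neighbourhood of $y_0$'' step fails. Indeed no argument can close this gap from hypotheses (1)--(3) exactly as stated: take $X=Y=Z=U=V=\mathbb{R}$, $z_0=0$, $F(y,x)=y-f(x)$ with $f(x)=e^{-x}\sin x$. Then $\dif F(y,\cdot)_x$ is always Fredholm of index $0$, the full differential $(\eta,v)\mapsto\eta-f'(x)v$ is onto everywhere, and $F^{-1}(0)$ is the graph of $f$, covered by the compact pieces $C_s=\{(f(x),x):|x|\le s\}$ on which the projection is trivially proper; yet $\{y:\ 0\text{ is a regular value of }F(y,\cdot)\}$ is the set of regular values of $f$, which contains $0$ while the critical values $\pm e^{-\pi/4-k\pi}/\sqrt{2}$ accumulate at $0$ --- dense, but not open. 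So $\sigma$-properness alone yields only residuality/density; openness needs more, e.g.\ properness of $\pi_Y\circ\iota$ on all of $F^{-1}(z_0)$ (then $\Pi(\Sigma)$ is closed outright) or a local a priori bound ensuring that for $y$ near $y_0$ all solutions lie in one fixed $C_s$. Your proof should either invoke such a strengthening or drop the openness claim; it would also be prudent to check the quoted statement against Henry's original formulation before the ``open'' part is used downstream (e.g.\ for the openness of $\mathcal{D}_{W,\nu}$).
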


	The first step to prove our main result is the lemma that follows, in which we restrict ourselves to nonconstant solutions:
	\begin{lemma}\label{lemma:main}
	Fix $g_0 \in \mathcal{M}^k$. Suppose that (\ref{eqn:growth-condition-1}) and (\ref{eqn:growth-condition-2}) hold. Then
	\begin{multline*}
		\mathcal{D}_{W,\nu}
		=
		\left\{
			(\epsilon,g) \in ]0,\infty[\times\mathcal{M}^k: \text{ any solution } (u,\lambda) \in (H_{g_0}(M)\setminus\real) \times \real
		\right.
		\\	
		\left.
			\text{ for } \eqref{eqn:nonlinear-problem} \text{ is nondegenerate}
		\right\}
	\end{multline*}
	is an open dense subset of $]0,\infty[\times\mathcal{M}^k$.
	\end{lemma}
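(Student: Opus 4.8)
The plan is to apply the abstract transversality theorem (Theorem~\ref{thm:transversality}) to the map $F_W$ of Definition~\ref{defn:map-F}, with the roles $Y = ]0,\infty[\times\mathcal{M}^k$ (the "parameter" space), $X = (H_{g_0}(M)\setminus\real)\times\real$ (the "variable" space), $Z = H_{g_0}(M)\times\real$, and the distinguished value $z_0 = (0,\nu)$. By Remark~\ref{rmk:solution-is-in-F-10}, $F_W^{-1}(0,\nu)$ is exactly the set of nonconstant solutions of \eqref{eqn:nonlinear-problem}, and by the discussion following Theorem~\ref{thm:main} (identifying $\ker\mathrm{Hess}(J_{W,\epsilon,g})$ with solutions of the linearized problem), the condition that $(0,\nu)$ be a regular value of $F_W(\epsilon,g,\cdot,\cdot)$ is precisely the nondegeneracy of every nonconstant solution at the parameter $(\epsilon,g)$. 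Hence the set $\mathcal{D}_{W,\nu}$ in the statement coincides with $\{(\epsilon,g): (0,\nu) \text{ is a regular value of } F_W(\epsilon,g,\cdot,\cdot)\}$, and once the three hypotheses of Theorem~\ref{thm:transversality} are verified, the conclusion follows immediately.

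So the work is to check the three hypotheses. First, the Fredholm property: for fixed $(\epsilon,g)$, the differential $\dif(F_W)_{(\epsilon,g,u,\lambda)}[v,\Lambda]$ computed in Lemma~\ref{lemma:map-F} has the form $(v - A_{\epsilon,g}\circ\dif(B_W)_{(u,\lambda)}[v,\Lambda], \int_M v\dif\mu_g)$ in the $X$-variables; since $A_{\epsilon,g}$ is compact (Remark~\ref{rmk:property-of-map-A}) and $\dif(B_W)_{(u,\lambda)}$ is bounded, the $H_{g_0}$-component is of the form identity-plus-compact on $H_{g_0}(M)$, while the $\real$-factors contribute a finite-dimensional correction; one concludes it is Fredholm of index $0$, which is strictly less than $m=1$, the regularity class of $F_W$. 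Second, surjectivity of the full differential $\dif(F_W)_{(\epsilon,g,u,\lambda)}$ on $Y\times X$ at every point of $F_W^{-1}(0,\nu)$: this is the crux and I address it below. Third, $\sigma$-properness of $\pi_Y\circ\iota$: one writes $F_W^{-1}(0,\nu) = \bigcup_{s\geq 1} C_s$ where $C_s$ consists of those solutions $(\epsilon,g,u,\lambda)$ with $\epsilon \in [1/s, s]$, $\|g - g_0\|$ bounded appropriately and bounded away from the boundary of the cone $\mathcal{M}^k$, and $\|u\|_{H_{g_0}} + |\lambda| \leq s$; elliptic estimates plus the compact inclusion $H_{g_0}(M)\hookrightarrow L^p_{g_0}(M)$ give that $\restr{\pi_Y\circ\iota}{C_s}$ is proper, essentially because a sequence of solutions with parameters in a compact set and bounded norms subconverges to a solution (the constant-function obstruction is what forces us to work in $H_{g_0}(M)\setminus\real$ and is handled in the passage to $\mathcal{D}^*_{W,\nu}$ later).

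The main obstacle is hypothesis (2), the surjectivity of $\dif(F_W)_{(\epsilon,g,u,\lambda)}$ at a solution. Since the $X$-derivative is already Fredholm of index $0$, it suffices to show that the image of the full differential is dense, or equivalently — taking a putative element $(\phi,c) \in H_{g_0}(M)\times\real$ orthogonal to the image — that such an element must vanish. Testing against variations $[v,\Lambda]$ in the $X$-directions shows that $\phi$ lies in the kernel of the (self-adjoint) linearized operator and that $c$ is constrained by the linearized volume condition; one then must show that by choosing the metric variation $h$ (and possibly $\eta$) appropriately, the extra terms $-\dif A_{(\epsilon,g)}[\eta,h]\circ B_W(u,\lambda)$ and $\int_M \frac12(\tr{g}{h})u\dif\mu_g$ appearing in Lemma~\ref{lemma:map-F} can be made to pair nontrivially with $(\phi,c)$ unless $(\phi,c)=(0,0)$. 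Using Lemma~\ref{lemma:map-A} to rewrite $\dif A_{(\epsilon,g)}[\eta,h]$ in terms of $\dif E_{(\epsilon,g)}[\eta,h]$ and the bilinear form $\frac12\int_M(\tr{g}{h})uv\dif\mu_g$, the pairing reduces to an integral expression in $\tr{g}{h}$, $b_{g,h}$, $u$, $\phi$ and their gradients; the key point — exactly as in \cite{micheletti_pistoia:2009} and \cite{ghimenti_micheletti:2011} — is that if $\phi$ were a nontrivial solution of the linearized equation, a unique continuation argument prevents $u$ and $\phi$ from being simultaneously "degenerate" on an open set, so a localized choice of $h$ produces a nonzero pairing; here the nonconstancy of $u$ (whence $\nabla u \not\equiv 0$) is essential, since it gives a point where $\nabla u \neq 0$ around which to localize $h$ and extract the contradiction. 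I would carry out this localization carefully, treating the scalar ($\Lambda$, $c$) degrees of freedom separately by first using the $\Lambda$-variation to control the constant part and then the metric variation to control the rest.
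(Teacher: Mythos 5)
Your overall strategy is exactly the paper's: apply Theorem~\ref{thm:transversality} to $F_W$ with $X = Z = H_{g_0}(M)\times\real$, $U = (H_{g_0}(M)\setminus\real)\times\real$, $Y = V = ]0,\infty[\times\mathcal{S}^k$, $z_0=(0,\nu)$, and verify the three hypotheses; your treatment of the Fredholm hypothesis matches Lemmas~\ref{lemma:first-hypothesis-prelim} and~\ref{lemma:first-hypothesis}, and your framing of hypothesis (2) via the orthogonal complement and metric variations is the paper's Remark~\ref{rmk:kernel-differential} and Lemma~\ref{lemma:nondeg}. However, you misplace where the nonconstancy of $u$ actually enters, in two different steps, and both matter.

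In the surjectivity step, the normal-coordinates perturbation of $h$ (following \cite[Lemma~12]{ghimenti_micheletti:2011}) yields the pointwise identities $g(\nabla u,\nabla v)=b_{g,h}(\nabla u,\nabla v)=0$ \emph{regardless} of whether $u$ is constant — it does not need, and does not exploit, a point where $\nabla u\neq 0$. Plugging $h=\varphi g$ then gives $(W'(u)-\lambda)v+\Lambda u=0$ pointwise; integrating this and using the weak equation for $u$ together with $\nu\neq 0$ forces $\Lambda=0$, hence $(W'(u)-\lambda)v=0$. Nonconstancy is used only now: it guarantees $\lambda-W'(u)\not\equiv 0$ (otherwise $u$ would weakly solve $-\epsilon^2\Delta_g u=0$ and hence be constant), so $v$ vanishes on a nonempty open set, and strong unique continuation for the linearized equation gives $v=0$. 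Your version — "localize $h$ near a point where $\nabla u\neq 0$ to extract the contradiction" — does not close: knowing $g(\nabla u,\nabla v)\equiv 0$ does not give $v=0$, and the chosen localization point plays no special role.

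In the $\sigma$-properness step, you say the constant-function obstruction is "handled in the passage to $\mathcal{D}^*_{W,\nu}$ later", but that passage deals with degeneracy of constant solutions in the proof of Theorem~\ref{thm:main}, not with the exhaustion of $F_W^{-1}(0,\nu)$. The exclusion of a neighborhood of constants must be built directly into the definition of $C_s$: the paper takes
\[
  C_s=\parens{[1/s,s]\times\overline{\mathscr{B}_{s}}\times\overline{I(0,s)\setminus B(\real,1/s)}\times[-s,s]}\cap F_W^{-1}(0,\nu),
\]
with $B(\real,1/s)$ a tubular neighborhood of $\real$ in $H_{g_0}(M)$. Without the $\setminus B(\real,1/s)$ term, $\restr{\pi_Y\circ\iota}{C_s}$ would not be proper: a sequence of solutions whose $u_n$-component converges to a constant leaves the domain $U$, so no subsequence converges within $C_s$.
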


	Its proof consists of a direct application of the abstract transversality theorem. Specifically, we consider $X=Z=H_{g_0}(M)\times \real$, $Y=V=]0,\infty[\times\mathcal{S}^k$, $U=(H_{g_0}(M)\setminus \real) \times \real$, $F=F_W$ and $z_0=(0,\nu)$. We verify that its hypotheses hold in section 4.

	After analysing the constant solutions for $\eqref{eqn:nonlinear-problem}$, we refine lemma \ref{lemma:main} to prove our main result:
\begin{proof}[Proof of Theorem \ref{thm:main}]
	Let $(\epsilon,g) \in ]0,\infty[\times\mathcal{M}^k$ and $\mathcal{U}$ be a neighborhood of $(\epsilon,g)$ in $]0,\infty[\times\mathcal{M}^k$.
	
	$\mathcal{D}^*_{W,\nu}\cap\mathcal{U}$ is not empty. Indeed, let $(\overline{\epsilon},\overline{g}) \in \mathcal{D}_{W,\nu}\cap\mathcal{U}$. If $\parens{P_{\nu,\overline{\epsilon},\overline{g},W}}$ does not admit constant solutions, then $(\overline{\epsilon},\overline{g}) \in \mathcal{D}^*_{W,\nu}\cap\mathcal{U}$. Otherwise, the volume constraint shows that the unique constant solution is $\nu/\mu_{\overline{g}}(M)$. This is a degenerate solution if, and only if, $\eqref{eqn:linearized-problem}$ admits a nontrivial solution. This only happens when there exists $j=1,2,...$ such that
	\begin{equation}\label{eqn:cst-is-deg}
		\overline{\epsilon}^2
		=
		-\frac{W''(\nu/\mu_{\overline{g}}(M))}{\alpha_j(\overline{g})},
	\end{equation}
	where $\mathcal{E}_{\overline{g}}=\set{\alpha_j\parens{\overline{g}}: j=1,2,...}$ is the set of nonzero eigenvalues of $-\Delta_{\overline{g}}$. $\mathcal{E}_{\overline{g}}$ is a discrete subset of $]0,\infty[$, so there exists $\hat{\epsilon}>0$ such that $(\hat{\epsilon},\overline{g}) \in \mathcal{D}_{W,\nu}\cap\mathcal{U}$ and (\ref{eqn:cst-is-deg}) does not hold for any positive integer $j$. This implies $(\hat{\epsilon},\overline{g}) \in \mathcal{D}^*_{W,\nu}$.
	
	$\mathcal{D}^*_{W,\nu}$ is an open subset of $]0,\infty[\times\mathcal{M}^k$. Indeed, let $(\hat{\epsilon},\hat{g}) \in \mathcal{D}^*_{W,\nu}$. If $(P_{W,\nu,\hat{\epsilon},\hat{g}})$ does not admit constant solutions, the result is a corollary of lemma \ref{lemma:main}. Otherwise, note that $\mathcal{M}^k \ni g \mapsto W''(\nu/\mu_g(M)) \in \real$ and $\mathcal{M}^k \ni g \mapsto \alpha_j(g) \in \real$ are continuous maps for any positive integer $j$, so $(\hat{\epsilon},\hat{g})$ admits a neighborhood $\mathcal{V}$ in $]0,\infty[\times \mathcal{M}^k$ in which the constant solutions are nondegenerate. To conclude, $\mathcal{V} \cap \mathcal{D}_{W,\nu}$ is a neighborhood of $(\epsilon,g)$ in $]0,\infty[\times\mathcal{M}^k$ for which the respective Allen-Cahn equation does not admit degenerate solutions.
\end{proof}

\section{Technical steps}

	For a pair $(\epsilon,g)\in]0,\infty[\times\mathcal{M}^k$, let $F_{W,\epsilon,g}\colon(H_{g_0}(M)\setminus \real) \times \real \to H_{g_0}\times \real$ be given by
	$
		F_{W,\epsilon,g}(u,\lambda)
		=
		F_W(\epsilon,g,u,\lambda).
	$
	We adopt similar notation when fixing other variables.

	In lemma \ref{lemma:first-hypothesis}, we shall verify that the first hypothesis of theorem \ref{thm:transversality} holds. With that objective in mind, consider the following preliminary result:
\begin{lemma}\label{lemma:first-hypothesis-prelim}
	\sloppy
	Let $g \in \mathcal{M}^k$, $C_g\colon H_{g_0}(M) \to \real$ be given by $C_g(v)=\int_M v \dif \mu_g$ and $T_g\colon H_{g_0}(M)\times\real \to H_{g_0}(M)\times\real$ be given by $T_g(v,\Lambda)=\parens{v,C_g(v)}$. Then $T_g$ is a Fredholm operator of index $0$.
\end{lemma}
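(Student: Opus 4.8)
The plan is to reduce the statement to an explicit computation of the kernel and the image of $T_g$, after first checking that $T_g$ is a bounded linear operator. The only preliminary point is that $C_g$ is a bounded linear functional on $H_{g_0}(M)$: the constant function $1$ lies in $L^{p'}_g(M)$, so Hölder's inequality gives $\abs{C_g(v)} = \abs{\int_M v \dif\mu_g} \leq \norm{1}_{p',g}\norm{v}_{p,g}$, and the inclusion $H_{g_0}(M) \to L^p_g(M)$ is continuous (it factors through the Banach space isomorphisms recalled in Section~2 and the Kondrakov embedding). Hence $T_g$ is bounded and linear, and it suffices to show that $\ker T_g$ is finite-dimensional, that $\im{T_g}$ is closed, and that $\im{T_g}$ has finite codimension, and then to add up the dimensions.

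First I would compute $\ker T_g$. If $T_g(v,\Lambda)=(0,0)$, then $v=0$, and in that case $C_g(v)=0$ automatically, while $\Lambda$ is unconstrained; conversely $(0,\Lambda)\in\ker T_g$ for every $\Lambda\in\real$. Thus $\ker T_g=\{0\}\times\real$, which has dimension $1$. Next I would describe $\im{T_g}$: a pair $(w,t)$ lies in the image exactly when $w\in H_{g_0}(M)$ and $t=C_g(w)$, i.e. $\im{T_g}$ is the graph of the continuous functional $C_g$. To see that this is a closed subspace of codimension $1$, consider the bounded linear map $\Phi\colon H_{g_0}(M)\times\real\to\real$ given by $\Phi(w,t)=t-C_g(w)$; it is surjective since $\Phi(0,t)=t$, and its kernel is precisely $\im{T_g}$, so $\im{T_g}$ is closed with codimension $1$.

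Combining these facts, $T_g$ is a Fredholm operator of index $\dim\ker T_g - \operatorname{codim}\im{T_g} = 1-1 = 0$, as claimed. (Equivalently, one may write $T_g=\mathrm{Id}+R$ with $R(v,\Lambda)=(0,C_g(v)-\Lambda)$ of rank at most one, hence compact, so that $T_g$ is a compact perturbation of the identity and is therefore Fredholm of index $0$ by Atkinson's theorem.) I do not expect any genuine obstacle here; the only step that deserves a moment of care is the continuity of $C_g$ on the \emph{fixed} space $H_{g_0}(M)$ rather than on $H_g(M)$, which is handled by the Hölder estimate above together with the equivalence of the norms $\angles{\cdot,\cdot}_{g_0}$ and $\angles{\cdot,\cdot}_g$.
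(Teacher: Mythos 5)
Your proof is correct and, while it shares the same end point as the paper's argument (both establish $\dim\ker T_g = 1$ and $\operatorname{codim}\im T_g = 1$), the route to the image computation differs. The paper decomposes the domain as $(\ker C_g\times\real)\oplus\real(1,0)$ and tracks codimensions through a short chain of intersection and sum identities; you instead recognize $\im T_g$ as the graph of $C_g$, hence the kernel of the continuous surjection $\Phi(w,t)=t-C_g(w)$, which immediately gives closedness and codimension $1$ in one stroke. Your way is leaner and makes the closedness of the image explicit rather than implicit, which is a genuine plus since closedness is part of the Fredholm definition. Your parenthetical observation that $T_g=\mathrm{Id}+R$ with $R(v,\Lambda)=(0,C_g(v)-\Lambda)$ of rank one is a genuinely different and arguably the slickest argument: it bypasses all dimension counting via Atkinson's theorem, and it also mirrors the structure used in Lemma~\ref{lemma:first-hypothesis} (where $\dif(F_{W,\epsilon,g})_{(u,\lambda)}$ is exhibited as $T_g$ minus a compact operator), so it fits the surrounding proof strategy nicely. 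One small remark: the paper's step $\operatorname{codim}\ker C_g=1$ tacitly uses that $C_g\not\equiv 0$, which holds because $C_g(1)=\mu_g(M)>0$; you sidestep this by working with $\Phi$ directly, though your Hölder estimate for continuity of $C_g$ is more elaborate than needed (boundedness already follows from the continuous embedding $H_{g_0}(M)\hookrightarrow L^1_{g_0}(M)\simeq L^1_g(M)$).
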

\begin{proof}
	$C_g$ is a linear functional, so $\mathrm{codim} \ \ker C_g =1$ in $H_{g_0}(M)$. This implies
	\[
		\mathrm{codim} \ T_g(\ker C_g  \times \real)=2
	\]
	in $H_{g_0}(M) \times \real$.
	
	\[
		T_g(\ker C_g  \times \real)\cap T_g(\real(1,0))=0,
	\]
	so
	\[
		\mathrm{codim} \ [T_g(\ker C_g  \times \real) + T_g(\real(1,0))] = \mathrm{codim} \ T_g(\ker C_g  \times \real) - 1=1.
	\]
	
	\[
		H_{g_0}(M) \times \real = (\ker C_g  \times \real) \oplus (\real(1,0)),
	\]
	so $\mathrm{codim} \ \im T_g=1.$ $\ker T_g=\set{0} \times \real$, so $T_g$ is a Fredholm operator of index $0$.
\end{proof}

\begin{lemma}\label{lemma:first-hypothesis}
	Given $(\epsilon,g) \in ]0,\infty[\times\mathcal{M}^k$, $F_{W,\epsilon,g}$ is a Fredholm map of index $0$.
\end{lemma}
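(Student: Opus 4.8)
The plan is to show that for fixed $(\epsilon,g)$, the differential $\dif(F_{W,\epsilon,g})_{(u,\lambda)}$ is, at every point, a compact perturbation of the Fredholm-index-$0$ operator $T_g$ from Lemma \ref{lemma:first-hypothesis-prelim}; since the space of Fredholm operators of a fixed index is stable under compact perturbations, this yields the claim. From Lemma \ref{lemma:map-F}, specializing to $(\eta,h)=(0,0)$, we have
\[
	\dif(F_{W,\epsilon,g})_{(u,\lambda)}[v,\Lambda]
	=
	\parens{
		v - A_{\epsilon,g}\circ\dif(B_W)_{(u,\lambda)}[v,\Lambda]
		,
		\int_M v \dif\mu_g
	}.
\]
Writing this as $T_g(v,\Lambda) - \parens{A_{\epsilon,g}\circ\dif(B_W)_{(u,\lambda)}[v,\Lambda],0}$, it suffices to prove that the second term defines a compact operator on $H_{g_0}(M)\times\real$.

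The key step is the compactness of $(v,\Lambda)\mapsto A_{\epsilon,g}\circ\dif(B_W)_{(u,\lambda)}[v,\Lambda]$. By the lemma preceding Definition \ref{defn:map-F}, $\dif(B_W)_{(u,\lambda)}[v,\Lambda]=\Lambda+v-vW''(u)$, and one checks using the growth condition (\ref{eqn:growth-condition-2}) that $v\mapsto vW''(u)$ is a bounded linear map $H_{g_0}(M)\to L^{p'}_{g_0}(M)$ (this is the linearization of the $C^1$ Nemytskii operator at the fixed point $u$), while $v\mapsto v$ and $\Lambda\mapsto\Lambda$ are clearly bounded into $L^{p'}_{g_0}(M)$ as well, so $\dif(B_W)_{(u,\lambda)}\colon H_{g_0}(M)\times\real\to L^{p'}_{g_0}(M)$ is bounded. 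Composing with $A_{\epsilon,g}$: recall $A_{\epsilon,g}=i_{\epsilon,g}^*$ is the adjoint of the compact Kondrakov inclusion $i_{\epsilon,g}\colon H_{\epsilon,g}(M)\to L^p_g(M)$, hence $A_{\epsilon,g}\colon L^{p'}_g(M)\to H_{\epsilon,g}(M)$ is itself compact. Since the canonical inclusions $H_{g_0}(M)\to H_{\epsilon,g}(M)$ and $L^{p'}_g(M)\to L^{p'}_{g_0}(M)$ (and their inverses) are Banach space isomorphisms by the norm-equivalence discussion in section 2, we may freely read $A_{\epsilon,g}$ as a compact operator $L^{p'}_{g_0}(M)\to H_{g_0}(M)$. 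The composition of a bounded operator with a compact operator is compact, so $A_{\epsilon,g}\circ\dif(B_W)_{(u,\lambda)}$ is compact, and therefore so is $(v,\Lambda)\mapsto\parens{A_{\epsilon,g}\circ\dif(B_W)_{(u,\lambda)}[v,\Lambda],0}$.

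Having established that $\dif(F_{W,\epsilon,g})_{(u,\lambda)}=T_g+\text{(compact)}$, I invoke the standard fact that if $T$ is Fredholm of index $l$ and $K$ is compact then $T+K$ is Fredholm of index $l$; with Lemma \ref{lemma:first-hypothesis-prelim} giving $\mathrm{ind}\,T_g=0$, we conclude $\dif(F_{W,\epsilon,g})_{(u,\lambda)}$ is Fredholm of index $0$ for every $(u,\lambda)\in(H_{g_0}(M)\setminus\real)\times\real$, i.e., $F_{W,\epsilon,g}$ is a Fredholm map of index $0$. The main obstacle is the verification that $v\mapsto vW''(u)$ maps $H_{g_0}(M)$ boundedly into $L^{p'}_{g_0}(M)$; this is where the subcritical growth exponent $p<p_n$ and condition (\ref{eqn:growth-condition-2}) are essential, via Hölder's inequality together with the Sobolev embedding $H_{g_0}(M)\hookrightarrow L^p_{g_0}(M)$ applied to both $u$ and $v$ — though, as noted, this boundedness is already subsumed in the cited fact that $B_W$ is a $C^1$ Nemytskii operator, so it requires no new work here.
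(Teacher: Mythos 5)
Your proof is correct and follows essentially the same route as the paper: decompose $\dif(F_{W,\epsilon,g})_{(u,\lambda)}$ as $T_g$ minus a compact operator built from $A_{\epsilon,g}\circ\dif(B_W)_{(u,\lambda)}$, then appeal to stability of the Fredholm index under compact perturbation. You merely spell out in a bit more detail why $\dif(B_W)_{(u,\lambda)}$ is bounded and how the various Banach-space identifications work, which the paper leaves implicit.
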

\begin{proof}
	Fix $(u,\lambda) \in H_{g_0}(M) \times \real$ and let $K_{W,\epsilon,g,u,\lambda}\colon H_{g_0}(M)\times\real \to H_{g_0}(M)\times\real$ be given by
	\[
		K_{W,\epsilon,g,u,\lambda}(v,\Lambda)
		=
		\parens{
			A_{\epsilon,g}\circ \dif \parens{B_W}_{(u,\lambda)}[v,\Lambda]
			,
			0
		}.
	\]
	
	$\dif\parens{F_{W,\epsilon,g}}_{(u,\lambda)}=T_g-K_{W,\epsilon,g,u,\lambda}$, where $T_g$ was defined in lemma \ref{lemma:first-hypothesis-prelim}. Therefore, it suffices to prove that $K_{W,\epsilon,g,u,\lambda}$ is a compact operator to conclude that $\dif\parens{F_{W,\epsilon,g}}_{(u,\lambda)}$ is a Fredholm operator with index $0$. This is indeed the case, because $A_{\epsilon,g}$ is a compact operator.
\end{proof}

	Let us examine the second hypothesis of the abstract transversality theorem. Let $(\epsilon,g,u,\lambda)\in F_W^{-1}(0,\nu)$. To conclude that $\dif\parens{F_W}_{(\epsilon,g,u,\lambda)}$ is surjective, it suffices to show that 
	\begin{equation}\label{eqn:inclusion}
		\brackets{\im{\dif\parens{F_{W,\epsilon,g}}_{(u,\lambda)}}}^\perp
		\subset
		\im{\dif\parens{F_{W,\epsilon,u,\lambda}}_g},
	\end{equation}
	which we shall prove in lemma \ref{lemma:nondeg}.
	
	The following defines an inner product on $H_{g_0}(M)\times\real$:
	\[
		\angles{\parens{u_1,t_1},\parens{u_2,t_2}}_{\epsilon,g}'
		=
		E_{\epsilon,g}\parens{u_1,u_2}+t_1t_2.
	\]
	This allows us to establish the characterization:
\begin{rmk}\label{rmk:kernel-differential}
	Let $(\epsilon,g) \in ]0,\infty[\times\mathcal{M}^k$ and $(u,\lambda), \ (v,\Lambda)\in H_{g_0}(M)\times\real$. Then
	\[
		(v,\Lambda) \in \brackets{\im{\dif\parens{F_{W,\epsilon,g}}_{(u,\lambda)}}}^\perp
		\text{ if, and only if, }
		(v,-\Lambda)
		\text{ is a solution for }
		\eqref{eqn:linearized-problem}.
	\]
\end{rmk}
	
	We use this characterization to prove inclusion (\ref{eqn:inclusion}):
\begin{lemma}\label{lemma:nondeg}
	Let $(\epsilon,g,u,\lambda) \in F_W^{-1}(0,\nu)$. Let $(v,-\Lambda) \in H_{g_0}(M)\times\real$ be a solution for $\eqref{eqn:linearized-problem}$. If
	\[
		\angles{
			\dif \parens{F_{W,\epsilon,u,\lambda}}_g[h]
			,
			(v,\Lambda)
		}_{\epsilon,g}'
		=
		0
	\]
	for all $h \in \mathcal{S}^k$, then $(v,\Lambda)=(0,0)$.
\end{lemma}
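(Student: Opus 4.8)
The plan is to unwind the hypothesis into a pointwise identity on $M$ and then use the freedom to vary $h \in \mathcal{S}^k$ to force $v$ (and hence $\Lambda$) to vanish. By Lemma~\ref{lemma:map-F}, the $g$-derivative of $F_{W,\epsilon,u,\lambda}$ in the direction $h$ has two components: the ``equation'' component involves $\dif A_{(\epsilon,g)}[0,h]\circ B_W(u,\lambda)$ together with the term coming from the fact that $A_{\epsilon,g}$ depends on $g$, and the ``constraint'' component is $\int_M \tfrac12 (\tr{g}{h})\,u \dif\mu_g$. Pairing with $(v,\Lambda)$ under $\angles{\cdot,\cdot}_{\epsilon,g}'$ means applying $E_{\epsilon,g}(\cdot,v)$ to the first component and multiplying the second by $\Lambda$.

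\textbf{Key steps.} First I would simplify the pairing $E_{\epsilon,g}(\dif A_{(\epsilon,g)}[0,h]\circ B_W(u,\lambda), v)$ using Lemma~\ref{lemma:map-A} with $\eta = 0$, which gives
\[
	E_{\epsilon,g}\parens{\dif A_{(\epsilon,g)}[0,h](B_W(u,\lambda)),v}
	=
	\frac12\int_M (\tr{g}{h})\,B_W(u,\lambda)\,v \dif\mu_g
	-
	\dif E_{(\epsilon,g)}[0,h]\parens{A_{\epsilon,g}B_W(u,\lambda),v}.
\]
Since $(\epsilon,g,u,\lambda)\in F_W^{-1}(0,\nu)$, Remark~\ref{rmk:solution-is-in-F-10} and the first equation of $\eqref{eqn:nonlinear-problem}$ give $A_{\epsilon,g}B_W(u,\lambda) = u$, and $B_W(u,\lambda) = \lambda + u - W'(u) = u + \epsilon^2\Delta_g u$ on the appropriate level. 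Next, using $\eta=0$ in Lemma~\ref{lemma:map-E}, $\dif E_{(\epsilon,g)}[0,h](u,v) = \epsilon^2\int_M b_{g,h}(\nabla u,\nabla v)\dif\mu_g + \tfrac12\int_M(\tr{g}{h})uv\dif\mu_g$. Assembling everything, the whole expression $\angles{\dif(F_{W,\epsilon,u,\lambda})_g[h],(v,\Lambda)}'_{\epsilon,g}$ collapses: the $\tfrac12\int (\tr{g}{h})uv$ terms cancel against each other, and after also using that $(v,-\Lambda)$ solves $\eqref{eqn:linearized-problem}$ (so $\int_M v\dif\mu_g = 0$ and $-\epsilon^2\Delta_g v + W''(u)v = -\Lambda$), I expect the identity to reduce to something of the form
\[
	\int_M \Big(\text{(quadratic expression in } \nabla u, \nabla v, u, v, W', W'') \text{ contracted with } h\Big)\dif\mu_g = 0
	\quad\text{for all } h\in\mathcal{S}^k.
\]
Because $\mathcal{S}^k$ is dense enough to test against an arbitrary symmetric $2$-tensor field, the integrand's symmetric-tensor coefficient must vanish identically; taking $h$ supported near a point and also taking the trace part $h = \phi g$ for scalar $\phi$ separately should yield both a ``gradient'' relation forcing $\nabla v = 0$ (so $v$ is constant) wherever $\nabla u \neq 0$, and a scalar relation. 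Since $u$ is nonconstant, $\{\nabla u \neq 0\}$ is a nonempty open set, so $v$ is constant there; combined with $\int_M v = 0$ and unique continuation / the structure of $\eqref{eqn:linearized-problem}$, $v \equiv 0$, whence $-\epsilon^2\Delta_g v + W''(u)v = -\Lambda$ gives $\Lambda = 0$.

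\textbf{Main obstacle.} The delicate point is the algebraic bookkeeping in collapsing the paired expression: one must carefully track the contributions of $b_{g,h}$ and the trace terms and verify that, after using both $A_{\epsilon,g}B_W(u,\lambda)=u$ and the linearized equation for $v$, everything except a single manifestly ``$h$-linear'' integral cancels. The subsequent density argument — concluding from $\int_M \Theta(h)\dif\mu_g = 0$ for all $h\in\mathcal{S}^k$ that the relevant tensorial coefficient vanishes pointwise, and then extracting $\nabla v = 0$ on $\{\nabla u \neq 0\}$ — is the conceptual heart, but it is a standard localization once the identity is in the right form. I would also need to handle the passage from ``$v$ constant on $\{\nabla u\neq 0\}$ and $\int v = 0$'' to ``$v\equiv 0$'' with a short connectedness or unique-continuation remark, since a priori $\{\nabla u \neq 0\}$ need not be all of $M$; here the equation $-\epsilon^2\Delta_g v + W''(u)v = -\Lambda$ for the constant value propagates the vanishing.
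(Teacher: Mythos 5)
The overall skeleton — expand $\angles{\dif (F_{W,\epsilon,u,\lambda})_g[h],(v,\Lambda)}'_{\epsilon,g}$ via Lemmas~\ref{lemma:map-E}, \ref{lemma:map-A}, \ref{lemma:map-F}, reduce to an integral identity $\int_M \Theta(h)\dif\mu_g = 0$ for all $h\in\mathcal{S}^k$, then localize and conclude $(v,\Lambda)=(0,0)$ — matches the paper's route, and your algebraic bookkeeping (using $A_{\epsilon,g}B_W(u,\lambda)=u$ and the linearized equation) points in the right direction. The paper's concrete form of the identity is $\int_M \epsilon^2\, b_{g,h}(\nabla u,\nabla v) + \tfrac{1}{2}(\tr{g}{h})\bigl[(W'(u)-\lambda)v + \Lambda u\bigr]\dif\mu_g = 0$, and the localization in $h$ is exactly the normal-coordinates argument you allude to.

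There is, however, a genuine gap in your endgame. You extract from the trace-free part of the test direction that $\nabla v = 0$ on $\{\nabla u\neq 0\}$, and then try to conclude $v\equiv 0$ from ``$v$ constant on $\{\nabla u\neq 0\}$, $\int_M v\dif\mu_g = 0$, plus unique continuation / connectedness.'' This does not close: $\{\nabla u\neq 0\}$ need not be connected, so $v$ may take different constant values on different components, and neither the zero-average condition nor a connectedness remark forces any of those constants to vanish. Your fallback — that the constant value $c$ on a component satisfies $W''(u)\,c = -\Lambda$ and hence ``propagates the vanishing'' — fails whenever $W''$ is constant on the relevant range of $u$, so it is not a valid general step. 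Strong unique continuation would need $v$ to vanish on a nonempty open set, which is precisely what you have not established.

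The paper resolves this by exploiting the \emph{scalar} (trace) relation rather than the gradient one. From the localization one gets the pointwise identity $(W'(u)-\lambda)v + \Lambda u = 0$. Integrating this and using the constraints $\int_M u\dif\mu_g = \nu$ and $\int_M v\dif\mu_g = 0$ gives $\int_M (W'(u)-\lambda)v\dif\mu_g = -\Lambda\nu$; on the other hand, testing the weak equation for $u$ against $v$ and using $g(\nabla u,\nabla v)=0$ gives $\int_M (W'(u)-\lambda)v\dif\mu_g = 0$. Since $\nu\neq 0$, this yields $\Lambda = 0$ \emph{first}. Then the pointwise identity collapses to $(W'(u)-\lambda)v = 0$; by the regularity result (Proposition~\ref{prop:regularity}), $u$ and $v$ are $C^1$, and since $u$ is nonconstant, $W'(u)-\lambda$ cannot vanish identically, so $v$ vanishes on a nonempty open set, and strong unique continuation then forces $v\equiv 0$. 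You should incorporate the scalar relation, the integration-against-constraints step to pin down $\Lambda$, and the $C^1$ regularity of $u,v$ before invoking unique continuation; as written, your last paragraph does not reach the conclusion.
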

\begin{proof}
	Due to lemmas \ref{lemma:map-E}, \ref{lemma:map-A} and \ref{lemma:map-F}, the equation on the statement is rewritten
\begin{equation}\label{eqn:conta-lema-deg}
	\int_M \epsilon^2 b_{g,h}\parens{\nabla u, \nabla v}
	+
	\frac{\parens{\tr{g}{h}}}{2}
	\brackets{
	\parens{W'(u)-\lambda}v
	+
	\Lambda u
	}
	\dif \mu_g
	=
	0,
\end{equation}
	where we recall that $b_{g,h}$ is a symmetric 2-covector on $M$ of class $C^k$ given locally by
	\[
		\parens{b_{g,h}}_{ij}=\parens{\tr{g}{h}} g^{ij}/2 - g^{iq}h_{ql}g^{lj}.
	\]
	
	An argument with normal coordinates centered at arbitrary $x \in M$ which considers specific perturbations of $g$ proves that
\begin{equation}\label{eqn:conta-lema-deg-2}
	g\parens{\nabla u, \nabla v}=b_{g,h}\parens{\nabla u, \nabla v} = 0 \in L^1_{g_0}(M)
\end{equation}
for any $h \in \mathcal{S}^k$. For details, see \cite[Lemma~12]{ghimenti_micheletti:2011}.

	Taking $h=\varphi g$ for arbitrary $\varphi \in C^\infty(M)$ shows that (\ref{eqn:conta-lema-deg}) and (\ref{eqn:conta-lema-deg-2}) imply
	\begin{equation}\label{eqn:conta-lema-deg-3}
		\parens{W'(u)-\lambda}v+\Lambda u = 0 \in L^1_{g_0}(M).
	\end{equation}
	
	On one hand: integrating the equation above (\ref{eqn:conta-lema-deg-3}) yields
	\begin{equation}\label{eqn:conta-lema-deg-4}
		\int_M \parens{W'(u)-\lambda}v \dif \mu_g=-\Lambda\nu.
	\end{equation}
	
	On the other hand: taking into account (\ref{eqn:conta-lema-deg-2}) and the fact that $u$ is a weak solution for $-\epsilon^2\Delta_g u + W'(u)=\lambda$,
	\[
		\int_M W'(u)v \dif \mu_g=\int_M \lambda v \dif \mu_g.
	\]
	
	\sloppy
	$\nu \neq 0$, so the last equation and (\ref{eqn:conta-lema-deg-4}) imply $\Lambda=0$. Due to (\ref{eqn:conta-lema-deg-3}), $\Lambda=0$ implies 		\[	
	\parens{W'(u)-\lambda}v = 0 \in L^1_{g_0}(M).
	\]
	
	If $\lambda-W'(u)\equiv 0$, then $u$ is a weak solution for $-\epsilon^2\Delta_gu=0$ -- which only happens with a constant $u$. We do not consider constant solutions, so $\lambda-W'(u)$ does not vanish identically. Due to proposition \ref{prop:regularity}, $u,v$ are functions of class $C^1$. Therefore, $\lambda-W'(u)$ is a continuous function which does not vanish identically.
	
	In particular, $v$ vanishes in a nonempty open subset of $M$. In this context, we can use strong unique continuation (\cite[Theorem~A.5]{pigola_rigoli_setti:2008}) in problem $\eqref{eqn:linearized-problem}$ to conclude that $v = 0 \in H_{g_0}(M)$.
\end{proof}

\begin{prop}\label{prop:regularity}
	Fix $(\epsilon,g)\in ]0,\infty[\times\mathcal{M}^k$ and $\alpha \in ]0,1[$. If $(u,\lambda) \in H_{g_0}(M)\times\real$ is a solution for $\eqref{eqn:nonlinear-problem}$, then $u \in C^{1,\alpha}(M)$. If it also holds that $(v,\Lambda) \in H_{g_0}(M)\times\real$ is a solution for $\eqref{eqn:linearized-problem}$, then $v \in C^{1,\alpha}(M)$.
\end{prop}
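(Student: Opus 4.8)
The plan is a bootstrap argument based on elliptic regularity, carried out first for $u$ and then, using the conclusion for $u$, for $v$. Throughout I use that a weak solution $u \in H_{g_0}(M)$ of $\eqref{eqn:nonlinear-problem}$ satisfies $-\epsilon^2 \Delta_g u = \lambda - W'(u)$ in the sense of distributions on $M$; that the coefficients of $\Delta_g$ are of class $C^k$, hence continuous, so the global $L^q$ elliptic estimate of Calder\'on--Zygmund type holds on the closed manifold $(M,g)$; and the Sobolev and Morrey embedding theorems.

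First I would show that $u \in L^q(M)$ for every $q < \infty$. The Sobolev embedding gives $u \in L^{s_0}(M)$ with $s_0 = 2n/(n-2)$ when $n \geq 3$, and $u \in L^{s_0}(M)$ for every $s_0 < \infty$ when $n = 2$. If $u \in L^s(M)$, then $\eqref{eqn:growth-condition-1}$ gives $\lambda - W'(u) \in L^{s/(p-1)}(M)$, so the elliptic estimate yields $u \in W^{2,s/(p-1)}(M)$, and the Sobolev embedding upgrades this to $u \in L^{s'}(M)$ with $s' = ns/(n(p-1)-2s)$ when $2s/(p-1) < n$, whereas $2s/(p-1) \geq n$ already forces $u \in L^q(M)$ for every $q < \infty$. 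A direct computation shows that $s' > s$ as soon as $s \geq s_0$, equivalently $s_0 > n(p-2)/2$, which holds precisely because $p < p_n$; iterating, the integrability of $u$ strictly increases and after finitely many steps one reaches $2s/(p-1) \geq n$, giving $u \in L^q(M)$ for all $q < \infty$. Feeding this back into $\eqref{eqn:growth-condition-1}$ gives $\lambda - W'(u) \in L^q(M)$ for all $q < \infty$, hence $u \in W^{2,q}(M)$ for all $q < \infty$, and taking $q > n/(1-\alpha)$ the Morrey embedding $W^{2,q}(M) \hookrightarrow C^{1,1-n/q}(M)$ gives $u \in C^{1,\alpha}(M)$.

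For the linearized problem I would note that a solution $v \in H_{g_0}(M)$ of $\eqref{eqn:linearized-problem}$ satisfies $-\epsilon^2 \Delta_g v = \Lambda - W''(u)v$ in the sense of distributions. Since $u \in C^{1,\alpha}(M) \subset L^\infty(M)$, growth condition $\eqref{eqn:growth-condition-2}$ gives $W''(u) \in L^\infty(M)$ (and $W''(u)$ is continuous, as $W$ is of class $C^2$). Then $\Lambda - W''(u)v \in L^2(M)$, so $v \in W^{2,2}(M)$; the analogue of the previous bootstrap now runs immediately, since $W''(u)v \in L^r(M)$ whenever $v \in L^r(M)$, so after finitely many applications of the Sobolev embedding and the elliptic estimate one obtains $v \in W^{2,q}(M)$ for every $q < \infty$ and hence $v \in C^{1,\alpha}(M)$ by Morrey embedding.

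The hard part --- really the only non-routine point --- is the exponent increment in the bootstrap for $u$: one must verify $s' > s$ at the first step, i.e.\ $s_0 > n(p-2)/2$, which is equivalent to $p < 2n/(n-2) = p_n$ for $n \geq 3$ (and automatic for $n = 2$). This is exactly the subcriticality hypothesis, and it is what prevents the iteration from stalling below $L^\infty$. Everything else --- the distributional formulation of weak solutions, global $L^q$ estimates on a closed manifold with continuous metric coefficients, and the embedding theorems --- is classical.
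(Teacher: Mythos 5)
Your proposal is correct and reaches the same conclusion, but by a somewhat different route than the paper. The paper's proof localizes in a coordinate chart (hence the explicit divergence-form operator with first-order terms $b^i\partial_i\tilde u$), obtains $\tilde u\in L^q$ for every $q<\infty$ by appealing to a Moser--Brezis--Kato type iteration (Struwe's Lemma~B.3), and only afterwards invokes $L^q$ elliptic regularity (as in Jost) to reach $W^{2,q}$ and conclude by Sobolev embedding. You instead work globally on the closed manifold and run the exponent bootstrap $L^s\to L^{s/(p-1)}\to W^{2,s/(p-1)}\to L^{s'}$ explicitly, checking that $s'>s$ exactly when $s>n(p-2)/2$, which the starting exponent $s_0=2n/(n-2)$ satisfies precisely because $p<p_n$. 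That is valid and in fact makes the role of subcriticality more transparent than the citation-based argument. The one step you should be more careful about is the first application of the $L^q$ elliptic estimate: the exponent $s_0/(p-1)$ can be strictly less than $2$ (for $n=3$ and $p$ near $p_n$ it is close to $6/5$), so you are asking for $W^{2,q}$ regularity of an $H^1$ weak solution with $q\in(1,2)$. That does hold on a closed manifold with a $C^k$ metric, via the invertibility of $-\Delta_g+1$ on $W^{2,q}(M)$ for $q\in(1,\infty)$ together with a uniqueness argument identifying the $H^1$ solution with the $W^{2,q}$ one, but it is not a one-line consequence of the usual $q\geq 2$ theory, and Struwe's test-function iteration, which the paper cites, is designed precisely to avoid this point. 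The reduction of the linearized problem to the case $W''(u)\in L^\infty$ via $u\in C^{1,\alpha}\subset L^\infty$ is the same as the paper's (implicit) argument for $v$.
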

\begin{proof}
	Regularity is a local problem, so we fix a coordinate system $\rho\colon\Omega\to\real^n$ where the $g^{ij}$s are bounded and $\Omega$ is the coordinate open subset of $M$. Let $\tilde{u}\colon\rho(\Omega)\to\real$ be the local expression of $u$. Note that $\tilde{u}$ is a weak solution for
	\[
		-\epsilon^2 \partial_i\parens{g^{ij}\partial_j \tilde{u}}+\epsilon^2b^i\parens{\partial_i \tilde{u}}
		+
		W'\parens{\tilde{u}}
		=
		\lambda,
	\]
	where $b^i=\partial_j\parens{g^{ij}}+g^{ij}\Gamma_{kj}^k$ for any $i=1,...,n$.
	
	A slight adaptation of \cite[Lemma~B.3]{struwe:2010} shows that $\tilde{u} \in L^q\parens{\rho(\Omega)}$ for every $q<\infty$. Arguing as in \cite[Theorem~12.2.2]{jost:2013}, one may show that given $q>1$, $W'\parens{\tilde{u}} \in L^q\parens{\rho(\Omega)}$ implies $u \in H^{2,q}\parens{\rho(\Omega)}$. To conclude, we use the Sobolev Embedding Theorem.
\end{proof}

	The third hypothesis is proved analogously as\cite[Lemma~11]{ghimenti_micheletti:2011}:
\begin{lemma}
	$\pi_Y\circ \iota\colon F_W^{-1}(0,\nu) \to Y$ is $\sigma$-proper, where $\pi_Y$ and $\iota$ are defined in theorem \ref{thm:transversality}.
\end{lemma}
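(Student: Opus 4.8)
The plan is to exhibit an explicit countable exhaustion $F_W^{-1}(0,\nu) = \bigcup_{s=1}^\infty C_s$ by closed sets on which $\pi_Y \circ \iota$ is proper, mimicking \cite[Lemma~11]{ghimenti_micheletti:2011}. The natural choice is to stratify by how far the solution $(u,\lambda)$ is from the excluded locus $\real$ (the constants) and by a bound on its norm together with a bound on $\epsilon$ staying away from $0$ and $\infty$: for $s = 1, 2, \dots$ set
\[
	C_s
	=
	\left\{
		(\epsilon,g,u,\lambda) \in F_W^{-1}(0,\nu)
		:
		\tfrac{1}{s} \leq \epsilon \leq s, \
		\norm{u}_{H_{g_0}(M)} + \abs{\lambda} \leq s, \
		\mathrm{dist}_{H_{g_0}(M)}(u,\real) \geq \tfrac{1}{s}
	\right\}.
\]
Every $(\epsilon,g,u,\lambda) \in F_W^{-1}(0,\nu)$ has $\epsilon \in ]0,\infty[$ and $u \notin \real$, so it lies in some $C_s$; thus the $C_s$ exhaust $F_W^{-1}(0,\nu)$. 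One should check each $C_s$ is closed in $F_W^{-1}(0,\nu)$: the constraints $1/s \leq \epsilon \leq s$, $\norm{u} + \abs{\lambda} \leq s$ and $\mathrm{dist}(u,\real) \geq 1/s$ are each closed conditions (the distance to the closed subspace $\real$ is continuous), and $F_W^{-1}(0,\nu)$ is closed in the full domain since $F_W$ is continuous (lemma \ref{lemma:map-F}); note $C_s$ sits inside $]0,\infty[\times\mathcal{M}^k \times (H_{g_0}(M)\setminus\real)\times\real$ because the distance bound keeps $u$ uniformly away from $\real$, so no sequence in $C_s$ can escape the domain through the deleted constants.

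Next I would prove that $\restr{\pi_Y\circ\iota}{C_s}$ is proper, i.e. that the preimage of a compact $L \subset Y = ]0,\infty[\times\mathcal{S}^k$ is compact in $C_s$. Take a sequence $(\epsilon_m, g_m, u_m, \lambda_m)$ in $C_s$ with $(\epsilon_m, g_m) \to (\epsilon_\infty, g_\infty) \in L$. The scalars $\epsilon_m$ converge; the $\lambda_m$ are bounded, hence have a convergent subsequence $\lambda_m \to \lambda_\infty$; and the $u_m$ are bounded in $H_{g_0}(M)$, hence converge weakly along a further subsequence to some $u_\infty$. The key is to upgrade this to strong convergence using the fixed-point structure: from remark \ref{rmk:solution-is-in-F-10}, $u_m = A_{\epsilon_m,g_m} \circ B_W(u_m, \lambda_m)$. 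Since $A \colon ]0,\infty[\times\mathcal{M}^k \to B(L^{p'}_{g_0}(M), H_{g_0}(M))$ is continuous (lemma \ref{lemma:map-A}) and the Nemytskii operator $B_W$ is continuous $H_{g_0}(M)\times\real \to L^{p'}_{g_0}(M)$, and — crucially — the inclusion $H_{g_0}(M)\hookrightarrow L^p_{g_0}(M)$ is compact (Kondrakov), so that $u_m \to u_\infty$ strongly in $L^p_{g_0}(M)$ and hence $B_W(u_m,\lambda_m) \to B_W(u_\infty,\lambda_\infty)$ strongly in $L^{p'}_{g_0}(M)$, one gets
\[
	u_m = A_{\epsilon_m,g_m}\, B_W(u_m,\lambda_m) \longrightarrow A_{\epsilon_\infty,g_\infty}\, B_W(u_\infty,\lambda_\infty)
	\quad\text{strongly in } H_{g_0}(M).
\]
Therefore $u_m \to u_\infty$ strongly, $u_\infty = A_{\epsilon_\infty,g_\infty}\circ B_W(u_\infty,\lambda_\infty)$ and $\int_M u_\infty \dif\mu_{g_\infty} = \lim \int_M u_m \dif\mu_{g_m} = \nu$, so $(\epsilon_\infty,g_\infty,u_\infty,\lambda_\infty) \in F_W^{-1}(0,\nu)$. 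Finally the closed constraints defining $C_s$ pass to the limit — in particular $\mathrm{dist}(u_\infty,\real) \geq 1/s > 0$, so $u_\infty \notin \real$ — hence the limit lies in $C_s$. This shows $\restr{\pi_Y\circ\iota}{C_s}$ is proper and completes the proof.

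The main obstacle is the strong-convergence upgrade: one must be careful that the compactness of the Kondrakov embedding and the continuity of $A$ in the operator-norm topology are genuinely being used to convert the weak limit into a strong one, and that all function spaces are identified with the fixed reference spaces $H_{g_0}(M)$, $L^{p'}_{g_0}(M)$ via the canonical isomorphisms (so that "compact operator" and "convergent sequence" make sense uniformly in $g$). A secondary point requiring care is that the volume-constraint functional $C_g(u) = \int_M u \dif\mu_g$ depends on $g$, so one needs joint continuity of $(g,u)\mapsto \int_M u\dif\mu_g$ to conclude $\int_M u_\infty\dif\mu_{g_\infty}=\nu$ in the limit; this follows from $\mu_{g_m}\to\mu_{g_\infty}$ in the appropriate sense together with the strong $L^1$-convergence of $u_m$.
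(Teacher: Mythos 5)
Your proof is correct and follows essentially the same approach as the paper's: exhaust $F_W^{-1}(0,\nu)$ by closed sets $C_s$ defined by norm bounds and a lower bound on the distance from $u$ to the constants, then use Kondrakov compactness and the fixed-point identity $u = A_{\epsilon,g}\circ B_W(u,\lambda)$ to upgrade boundedness to strong $H_{g_0}(M)$-convergence of a subsequence. The only cosmetic difference is that the paper's $C_s$ also carries a bound $g \in \overline{\mathscr{B}_s}$, which is harmless to omit (as you do) since the target compact set in $Y$ already confines $g$; your write-up is in fact somewhat more explicit than the paper's about checking that the limit stays in $C_s$ and about which continuity (that of $A$, plus the $L^p\to L^{p'}$ continuity of the Nemytskii operator) is being invoked.
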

\begin{proof}
	Given $s=1,2,...$; let
	\[
		C_s=\parens{[1/s,s] \times \overline{\mathscr{B}_{s}} \times \overline{I(0,s)\setminus B(\real,1/s)}\times [-s,s]}\cap F_W^{-1}(0,\nu),
	\]
	where $\mathscr{B}_{s}, I(0,s)$ are respective open balls in $\mathcal{S}^k, H_{g_0}(M)$ centered at $0$ with radius $s$ and
	\[
		B(\real,1/s)
		=
		\set{
			u \in H_{g_0}(M):
			\inf_{v \in \real} \norm{u+v}_{H_{g_0}} < 1/s
		}.
	\]
	
	Fix a positive integer $s$. Let us prove that $\restr{\pi_Y \circ \iota}{C_s}$ is a proper map. Let $\set{\parens{\epsilon_n,g_n,u_n,\lambda_n}}_n \subset C_s$ be a sequence such that $\lim_n g_n = g \in \mathcal{M}^k$, $\lim_n \epsilon_n=\epsilon \in [1/s,s]$ and given $n$, $\parens{u_n,\lambda_n}$ is a solution for $(P_{W,\nu,\epsilon_n,g_n})$.
	
	We claim that $\parens{\parens{u_n,\lambda_n}}_n$ has a convergent subsequence. Due to the Kondrakov theorem, the canonical inclusion $i_{\epsilon,g,t}\colon H_{\epsilon,g_0}(M)\to L_{g_0}^t(M)$ is a compact operator for any $t \in \left]2,p_n\right[$, so $\parens{u_n}_n$ converges in $L_{g_0}^t(M)$ up to subsequence to a certain $u \in L_{g_0}^t(M)$. $\parens{\lambda_n}_n$ is bounded, so it converges up to a subsequence to a certain $\lambda \in [-s,s]$. Arguing as in lemma \ref{lemma:first-hypothesis}, we see that $\lim_n A_{\epsilon,g}\circ B_W\parens{u_n,\lambda_n} = A_{\epsilon,g}\circ B_W\parens{u,\lambda}$. We can use the Mean Value Inequality and lemma \ref{lemma:map-A} to prove that, in fact, $\lim_n A_{\epsilon_n,g_n}\circ B_W\parens{u_n,\lambda_n} = A_{\epsilon,g}\circ B_W\parens{u,\lambda}$.
\end{proof}

\bibliographystyle{alpha}
\bibliography{bibliography}

\begin{thebibliography}{BNAP20}

\bibitem[BNAP20]{benci2020lusternikschnirelman}
Vieri Benci, Stefano Nardulli, Luis Eduardo~Osorio Acevedo, and Paolo Piccione.
\newblock Lusternik-schnirelman and morse theory for the van der
  waals-cahn-hilliard equation with volume constraint, 2020.

\bibitem[GM11]{ghimenti_micheletti:2011}
M.~Ghimenti and A.~M. Micheletti.
\newblock Non degeneracy for solutions of singularly perturbed nonlinear
  elliptic problems on symmetric riemannian manifolds, 2011.

\bibitem[Heb00]{hebey:2000}
E.~Hebey.
\newblock {\em Nonlinear Analysis on Manifolds: Sobolev Spaces and
  Inequalities}.
\newblock American Mathematical Society, 2000.

\bibitem[HHL05]{henry:2005}
Dan Henry, Jack~Kenneth Hale, and Pereira~Antônio Luiz.
\newblock {\em Perturbation of the boundary in boundary-value problems of
  partial differential equations}.
\newblock Cambridge University Press, 2005.

\bibitem[Jos13]{jost:2013}
Jürgen Jost.
\newblock {\em Partial Differential Equations}.
\newblock Springer, 2013.

\bibitem[Kav93]{kavian:1993}
O.~Kavian.
\newblock {\em Introduction \`{a} la th\'eorie des points critiques: et
  applications aux probl\`{e}mes elliptiques}.
\newblock Springer, 1993.

\bibitem[MP09]{micheletti_pistoia:2009}
A.~M. Micheletti and A.~Pistoia.
\newblock Generic properties of singularly perturbed nonlinear elliptic
  problems on riemannian manifold.
\newblock {\em Advanced Nonlinear Studies}, 9:803--813, 2009.

\bibitem[PR03]{pacard:2003}
Frank Pacard and Manuel Ritoré.
\newblock From constant mean curvature hypersurfaces to the gradient theory of
  phase transitions.
\newblock {\em J. Differential Geom.}, 64(3):359--423, 01 2003.

\bibitem[PRS08]{pigola_rigoli_setti:2008}
Stefano Pigola, Marco Rigoli, and Alberto~G. Setti.
\newblock {\em Vanishing and Finiteness Results in Geometric Analysis A
  Generalization of the Bochner Technique}.
\newblock Birkhäuser Basel, 2008.

\bibitem[Str10]{struwe:2010}
Michael Struwe.
\newblock {\em Variational methods: applications to nonlinear partial
  differential equations and Hamiltonian systems}.
\newblock Springer, 2010.

\end{thebibliography}
\end{document}